\newtheorem{thm}{Theorem}[section]
\newtheorem{cor}[thm]{Corollary}
\newtheorem{lem}[thm]{Lemma}
\newtheorem{rem}[thm]{Remark}
\newtheorem{question}[thm]{Question}
\newtheorem{lemma}[thm]{Lemma}
\newcommand{\Sym}{\mathrm{Sym}}
\newcommand{\Fq}{\mathbb F_q}
\newcommand{\Z}{\mathbb Z}
\newcommand{\Gal}{\mathrm {Gal}}
\begin{document}

\title{Artin Conjecture for $p$-adic Galois Representations of Function Fields}

\author{Ruochuan Liu\\Beijing International Center for Mathematical Research \\Peking University, Beijing, 100871\\liuruochuan@math.pku.edu.cn\\
\\
Daqing Wan\\Department of Mathematics\\University of California, Irvine, CA 92697-3875\\dwan@math.uci.edu}

%\begin{document}

\maketitle

\abstract{For a global function field $K$ of positive characteristic $p$, we show that Artin's entireness conjecture  
for L-functions of geometric $p$-adic Galois representations of $K$ is true in a non-trivial $p$-adic disk but is false in the full $p$-adic plane.}
In particular, we prove the non-rationality\footnote{Acknowledgements. It is a pleasure to thank Jean-Yves Etesse whose persistent interest in the non-rationality 
of the geometric unit root L-function motivated us to complete this paper.  The second author would like to thank BICMR for its hospitality while this paper was written.} of the geometric unit root L-functions.

% ----------------------------------------------------------------

\section{Introduction}

Let $\Fq$ be the finite field of $q$ elements with characteristic
$p$. Let $C$ be a smooth projective geometrically connected curve
defined over $\Fq$ with function field $K$. Let $U$ be a Zariski
open dense subset of $C$ with inclusion map $j: U\hookrightarrow C$. 
Let $G_K={\rm Gal}(K^{\rm sep}/K)$ denote the absolute Galois
group of $K$. For example, we can take $C={\mathbb P}^1$,
$U={\mathbb P}^1 -\{ 0, \infty\}$ and $K=\Fq(t)$.

Let $\pi_1^{\rm arith}(U)$ denote the arithmetic fundamental group
of $U$. That is,
$$\pi_1^{\rm arith}(U) = G_K/<I_x>_{x\in |U|},$$
where the denominator denotes the closed normal subgroup generated
by the inertial subgroups $I_x$ as $x$ runs over the closed points
$|U|$ of $U$. Let $D_x$ denote the decomposition group of $G_K$ at
$x$. One has the following exact sequence
$$1\rightarrow I_x \rightarrow D_x \rightarrow {\rm Gal}(\bar{k}_x/k_x) \rightarrow 1,$$ where $k_x$ denotes the
residue field of $K$ at $x$. The Galois group ${\rm Gal}(\bar{k}_x/k_x)$ is topologically generated by the geometric
Frobenius element ${\rm Frob}_x$ which is characterized by the property:
$${\rm Frob}_x^{-1}: \alpha \rightarrow \alpha^{\# k_x}.$$

Let $P_x$ denote the $p$-Sylow subgroup of $I_x$. Then we have the
following exact sequence
$$1\rightarrow P_x \rightarrow I_x \rightarrow I_x^{\rm tame}=\prod_{\ell\not=p}\Z_{\ell}(1) \rightarrow 1.$$

Let $F_{\ell}$ be a finite extension of ${\mathbb Q}_{\ell}$,
where $\ell$ is a prime number which may or may not equal to $p$.  Let $V$ be a finite dimensional vector
space over $F_{\ell}$. Let
$$\rho: G_K \longrightarrow GL(V)$$ be a continuous $\ell$-adic 
representation of $G_K$ unramified on $U$. Equivalently,
$$\rho: \pi_1^{\rm arith}(U)\longrightarrow GL(V)$$
is a continuous representation of $\pi_1^{\rm arith}(U)$.  The representation $\rho$ is called {\sl geometric} if it comes  
from an $\ell$-adic cohomology  of a smooth proper variety over $U$.  The geometric representations 
are the most interesting ones in applications. 

Given a representation $\rho$, its 
L-function is defined by
$$L(U, \rho, T) =\prod_{x\in |U|}\frac{1}{{\rm det}(I-\rho({\rm Frob}_x)T^{{\rm deg}(x)}|V)}\in 1+TR_{\ell}[[T]],$$
where $R_{\ell}$ is the ring of
integers in $F_{\ell}$. 
It is clear that this L-function is trivially $\ell$-adic analytic in the open unit disc $|T|_{\ell} <1$. 

%Similarly, the
%complete L-function of $\rho$ on $C$ is defined by
%$$L(C, \rho) =\prod_{x\in |C|}\frac{1}{{\rm det}(I-\rho({\rm Frob}_x)T^{{\rm deg}(x)}|V^{I_x})}\in 1+TR_{\ell}[[T]],$$ where
%$V^{I_x}$ is the subspace of $V$ on which $I_x$ acts trivially.
%These two L-functions differ only by a finite number of Euler
%factors. 

We are interested in further analytic properties of this 
L-function $L(U, \rho, T)$, especially for those representations which come from
geometry \footnote{ Our definition of
a geometric representation depends on $\ell$. If $\ell\neq p$, then it forms a compatible system for all $\ell\neq p$ and one allows sub-quotients in the definition of geometric representations. In the case
$\ell=p$, our definition does not allow sub-quotients. In fact,
this was raised as an open problem in Remark 3.4.}.  More precisely, we want to know 

\begin{question} [Meromorphic continuation] When and where the L-function $L(U, \rho, T)$ is $\ell$-adic meromorphic? 
\end{question}

\begin{question}[Artin's conjecture]  
Assume that $\rho$ has no geometrically trivial component. When and where the L-function $L(U,\rho, T)$ is 
$\ell$-adic entire (no poles or analytic)? 
\end{question}

The answer depends very much on whether $\ell$ equals to $p$ or not. In the easier case $\ell \not =p$, the 
Grothendieck trace formula \cite{Gr} gives the following complete answer. 

\begin{thm} Assume that $\ell\not =p$. The L-function $L(U, \rho, T)$ is a rational function in $F_{\ell}(T)$. 
If $\rho$ has no geometrically trivial component, then $L(U, \rho, T)$ is a  
polynomial in $F_{\ell}[T]$. 

\end{thm}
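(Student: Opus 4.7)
The plan is to appeal directly to the Grothendieck--Lefschetz trace formula, which is tailor-made for $\ell \neq p$ representations. First I would translate the given continuous representation $\rho: \pi_1^{\mathrm{arith}}(U) \to GL(V)$ into a lisse $\ell$-adic sheaf $\mathcal{F}$ on $U$; this is standard since $\rho$ factors through a finite quotient when reduced mod $\ell^n$, so one obtains a compatible system of étale $R_\ell/\ell^n$-sheaves that assemble into $\mathcal{F}$. Under this dictionary the local factor at $x \in |U|$ is exactly $\det(1 - \mathrm{Frob}_x T^{\deg(x)} \mid \mathcal{F}_{\bar{x}})^{-1}$, matching the definition of $L(U, \rho, T)$.

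Next I would invoke Grothendieck's formula \cite{Gr} for the curve $U$ to write
\[
L(U, \rho, T) \;=\; \prod_{i=0}^{2} \det\!\bigl(1 - \mathrm{Frob}_q\, T \,\big|\, \HH^i_c(U_{\FFq}, \mathcal{F})\bigr)^{(-1)^{i+1}}.
\]
Since $U$ is a curve, only $i=0,1,2$ contribute, and each cohomology group is a finite-dimensional $F_\ell$-vector space. Consequently every factor is a polynomial and $L(U, \rho, T) \in F_\ell(T)$, proving rationality.

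For the second assertion I would analyze the extreme cohomology groups and show both vanish under the no-geometrically-trivial-component hypothesis. The group $\HH^0_c(U_{\FFq}, \mathcal{F})$ is zero whenever $U$ is affine, and in the proper case $U = C$ it equals the geometric invariants $V^{\pi_1^{\mathrm{geom}}(C)}$, which vanish by assumption. For $\HH^2_c$, Poincaré duality gives $\HH^2_c(U_{\FFq}, \mathcal{F}) \cong V_{\pi_1^{\mathrm{geom}}(U)}(-1)$, the coinvariants with a Tate twist; the coinvariants vanish because no trivial quotient occurs geometrically, which is equivalent to the subrepresentation condition once subquotients are allowed (as the footnote in the excerpt allows for $\ell \neq p$). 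Only $\HH^1_c$ then contributes, yielding
\[
L(U, \rho, T) \;=\; \det\!\bigl(1 - \mathrm{Frob}_q T \,\big|\, \HH^1_c(U_{\FFq}, \mathcal{F})\bigr) \;\in\; F_\ell[T].
\]

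The only real subtlety is the vanishing of $\HH^0$ and $\HH^2_c$, and this reduces to elementary representation theory once one has Poincaré duality in hand; no estimates or analytic input are needed. Since Grothendieck's formula is quoted as a black box, the argument is essentially a bookkeeping exercise and there is no serious obstacle in the $\ell \neq p$ setting. The sharp contrast with the $\ell = p$ case, where none of this machinery applies and the L-function may fail to be rational, is precisely the motivation for the subsequent sections of the paper.
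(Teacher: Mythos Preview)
Your proposal is correct and follows essentially the same route as the paper: invoke Grothendieck's trace formula (Theorem~2.1) to get rationality, then kill $H^0_c$ and $H^2_c$ under the no-geometrically-trivial-component hypothesis to reduce to the single $H^1_c$ factor, exactly as in Corollary~2.2. Your treatment is slightly more complete in that you handle the proper case $U=C$ for $H^0_c$ explicitly and justify the vanishing of $H^2_c$ via Poincar\'e duality and coinvariants, whereas the paper simply asserts these vanishings and restricts the corollary to affine $U$.
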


In the case $\ell =p$, the situation is much more subtle. A general conjecture of Katz \cite{Ka} as proved by Emerton-Kisin \cite{EK}
says that the above two questions still have a complete positive answer if we restrict to the closed unit disc. 
That is, we have 

\begin{thm} Assume that $\ell=p$. The L-function $L(U, \rho, T)$ is $p$-adic meromorphic on the 
closed unit disc $|T|_p\leq 1$. If $\rho$ has no geometrically trivial component, then the L-function 
$L(U, \rho, T)$ is $p$-adic analytic (no poles) on  the closed unit disc $|T|_p\leq 1$. 
\end{thm}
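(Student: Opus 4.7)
The plan is to interpret $L(U,\rho,T)$ cohomologically on the closed unit disc, extending the Grothendieck-Lefschetz trace formalism from $\ell\ne p$ to $\ell=p$. First I would fix a $G_K$-stable $R_p$-lattice $T\subset V$ (which exists by continuity and compactness of the image of $\rho$), so that $\rho$ factors through $GL_n(R_p)$. For each $m\ge 1$, the reduction $T/p^mT$ corresponds to a finite locally constant \'etale sheaf $\mathcal{F}_m$ on $U$, and the Grothendieck trace formula applied to these $p$-torsion sheaves on the curve yields
\[
L(U,\rho,T)\equiv \prod_{i=0}^{2}\det\bigl(1-T\cdot{\rm Frob}\mid H^i_c(U_{\FFq},\mathcal{F}_m)\bigr)^{(-1)^{i+1}} \pmod{p^m}.
\]
Passing to the inverse limit over $m$ formally expresses $L(U,\rho,T)$ as an alternating product of characteristic power series of Frobenius on the profinite $R_p$-modules $M^i:=\varprojlim_m H^i_c(U_{\FFq},\mathcal{F}_m)$.

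The essential difficulty is that, unlike in the $\ell\ne p$ situation, the modules $M^i$ are typically not finitely generated over $R_p$, so these characteristic power series are not a priori meromorphic on any disc larger than $|T|_p<1$. The main step I would carry out, following Emerton-Kisin, is to establish for each $i$ a Frobenius-equivariant decomposition, in an appropriate category of $\sigma$-modules, isolating a finitely generated $R_p$-submodule $N^i\subset M^i$ carrying the ``unit root'' or ``slope zero'' part, while the complementary piece has strictly positive Newton slopes and hence contributes only a unit to the Tate algebra $R_p\langle T\rangle$ of the closed unit disc. Granting such a slope filtration, one obtains
\[
L(U,\rho,T) = \prod_{i=0}^{2}\det\bigl(1-T\cdot{\rm Frob}\mid N^i\bigr)^{(-1)^{i+1}} \cdot u(T),
\]
for some unit $u(T)\in R_p\langle T\rangle^{\times}$, which is manifestly $p$-adic meromorphic on $|T|_p\le 1$.

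The hard part is precisely this slope filtration on the enormous limit cohomology $M^i$: it requires a nuclear/compact Frobenius formalism for $p$-adic \'etale cohomology with coefficients in a wildly ramified $p$-adic representation, which Emerton-Kisin obtain by adapting Fontaine's $(\varphi,\Gamma)$-module machinery together with a Cohen ring construction over the function field $K$ to handle the ramification at the boundary $C\setminus U$. Once this is available, the Artin conjecture portion is formal: $H^0_c(U_{\FFq},\mathcal{F}_m)=0$ as $U$ is a smooth affine curve, while $H^2_c(U_{\FFq},\mathcal{F}_m)$ computes the geometric coinvariants of $\mathcal{F}_m$ up to Tate twist, so $N^2$ vanishes in the limit under the no-geometrically-trivial-component hypothesis (applied to $\rho$ and, via Poincar\'e duality, its dual). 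Only $N^1$ then contributes, and $\det(1-T\cdot{\rm Frob}\mid N^1)$ is a polynomial, which together with the unit $u(T)$ yields $p$-adic analyticity of $L(U,\rho,T)$ on the closed unit disc.
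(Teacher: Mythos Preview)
The paper does not give its own proof of this theorem; it is quoted as a result of Emerton--Kisin (restated more precisely as Theorem~3.1 in Section~3). The only argument the paper adds is the short deduction of the pole-free part from Theorem~3.1: it observes that $H^2_c(U\otimes\bar{\mathbb{F}}_q,\mathcal{F}_\rho)=0$ \emph{automatically} because $\ell=p$ and $U$ is a curve, and that $H^0_c=0$ when $U$ is affine, so only the $i=1$ factor survives up to a unit on $|T|_p\le 1$.

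Your sketch of the meromorphy part is in a reasonable spirit, and since the paper simply cites Emerton--Kisin there is nothing to compare against beyond that. But your deduction of the Artin part contains a genuine conceptual error: you have reversed the roles of $H^0_c$ and $H^2_c$. You claim $H^2_c$ computes geometric coinvariants up to a Tate twist and then kill it with the no-geometrically-trivial-component hypothesis; that is the $\ell\neq p$ Poincar\'e duality picture and fails here. In characteristic $p$ the \'etale $p$-cohomological dimension of a curve is $\le 1$, so $H^2_c$ with $p$-torsion (hence $p$-adic) coefficients vanishes unconditionally, and this is exactly what the paper uses. Conversely, you dispose of $H^0_c$ by declaring $U$ affine, but the theorem as stated does not assume this; when $U=C$ is projective one has $H^0_c=H^0=V^{\pi_1^{\mathrm{geom}}}$, and it is \emph{there}, not at $H^2_c$, that the no-geometrically-trivial-component hypothesis is needed. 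As a smaller point, your description of Emerton--Kisin's method as ``Fontaine's $(\varphi,\Gamma)$-module machinery with a Cohen ring construction'' is off: their argument goes through a Riemann--Hilbert correspondence between \'etale $\mathbb{Z}/p^n$-sheaves and locally finitely generated unit $\mathcal{O}_X[F]$-modules, combined with Cartier-operator and coherent duality techniques, rather than $(\varphi,\Gamma)$-modules.
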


The extension of the above results to larger $p$-adic disc is more subtle. For any given $\epsilon >0$, there 
are examples \cite{W1} showing that the L-function $L(U,\rho, T)$ is not $p$-adic meromorphic in the disc $|T|_p < 1 +\epsilon$, 
disproving another conjecture of Katz \cite{Ka}. However, if $\rho$ comes from geometry, then Dwork's conjecture \cite{Dw}
as proved by the second author \cite{W2}\cite{W3} shows the L-function is  indeed a good $p$-adic function: 

\begin{thm} Assume that $\ell=p$. If $\rho$ comes from geometry, then 
the L-function $L(U, \rho, T)$ is $p$-adic meromorphic in the whole 
$p$-adic plane $|T|_p < \infty$. 
\end{thm}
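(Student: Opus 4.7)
The plan is to translate the statement from Galois representations into $p$-adic cohomology and reduce it to a meromorphy statement for unit-root L-functions of overconvergent F-isocrystals. Since $\rho$ comes from geometry, the $p$-adic comparison theorems realize $\rho$, up to finitely many Euler factors, as the unit-root sub F-isocrystal $M^{\mathrm{ur}}$ of an overconvergent F-isocrystal $M = \R^i f_{\mathrm{crys},*}\OO$ on $U/\Zp$ attached to a smooth proper family $f: X \to U$. Thus $L(U, \rho, T)$ agrees with the unit-root L-function $L(U, M^{\mathrm{ur}}, T)$ up to a polynomial factor. Were $M^{\mathrm{ur}}$ itself overconvergent, the Monsky--Washnitzer / rigid-cohomology trace formula would express this L-function as an alternating product of Fredholm determinants of completely continuous Frobenius operators, each $p$-adic entire, immediately yielding meromorphy on the full plane. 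The central obstacle is precisely that $M^{\mathrm{ur}}$, although sitting inside the overconvergent $M$, is typically \emph{not} itself overconvergent, so the trace formula cannot be applied to it directly.

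To circumvent this, I would follow Dwork's limiting-formula strategy. Locally at each $x \in |U|$, the unit-root characteristic polynomial of Frobenius can be recovered from the full Frobenius characteristic polynomial through an alternating infinite product whose factors involve only symmetric powers $\Sym^k M$, Tate-twisted by suitable powers of $p$. Multiplying these local identities over all closed points $x$ yields a global expression of the form
\[
L(U, M^{\mathrm{ur}}, T) = \prod_{k=0}^{\infty} L\bigl(U, \Sym^k M, p^{-k}T\bigr)^{\pm 1},
\]
in which every factor $L(U, \Sym^k M, T)$ is the L-function of an \emph{overconvergent} F-isocrystal and hence is already $p$-adic meromorphic on $|T|_p < \infty$, being a ratio of entire Fredholm determinants on the rigid cohomology $\HH^{*}_{\mathrm{rig}}(U, \Sym^k M)$ by the trace formula.

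The remaining and hardest step is to show that this infinite alternating product converges to a meromorphic function on every bounded disc $|T|_p \le r$. This reduces to a \emph{uniform} slope estimate: one must prove that the Newton slopes of the Fredholm determinants $\det(I - T\,\mathrm{Frob} \mid \HH^{*}_{\mathrm{rig}}(U, \Sym^k M))$ grow at least linearly in $k$, so that after the twist $T \mapsto p^{-k}T$ the $k$th factor tends to $1$ on any fixed disc as $k \to \infty$. Establishing this uniform lower bound on Newton polygons of symmetric powers---via Katz's slope-filtration philosophy combined with a careful analysis of the nuclear $\sigma$-module structure on the Monsky--Washnitzer algebra of $U$---is by far the deepest ingredient, and it is where I expect the real work to lie. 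Once it is in hand, a standard Weierstrass-type argument assembles the factors into a single $p$-adic meromorphic function on $|T|_p < \infty$, completing the proof.
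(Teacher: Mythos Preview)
The paper does not actually prove this theorem; it is quoted as Dwork's conjecture, established in \cite{W2}\cite{W3}, and used as background. So there is no ``paper's own proof'' to compare against line by line. That said, the paper's proof of Theorem~3.10 exposes enough of the machinery of \cite{W2}\cite{W3} that one can see how your outline compares to the actual argument.

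Your broad architecture is right and matches Wan's: realize $\rho$ as the unit-root piece of an overconvergent $\sigma$-module $\phi$, then express $L(\phi_0,T)$ as an infinite product of L-functions of genuinely overconvergent (nuclear) objects, each of which is meromorphic by a Monsky-type trace formula, and finally prove the product converges on every bounded disc. Where your proposal goes wrong is in the specific shape of the product and the convergence mechanism.

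First, the displayed identity
\[
L(U,M^{\mathrm{ur}},T)\;=\;\prod_{k\ge 0} L\bigl(U,\Sym^k M,\,p^{-k}T\bigr)^{\pm 1}
\]
is not correct, even schematically. Symmetric powers always contain the slope-zero piece, so the minimal Newton slope of $\Sym^k M$ is $0$ for every $k$; there is no linear growth of the \emph{bottom} slope to exploit. Moreover the twist $T\mapsto p^{-k}T$ moves zeros and poles \emph{toward} the origin, shrinking rather than enlarging the region of control, so the product cannot stabilize on a fixed disc the way you describe. The actual formulas in \cite{W2}\cite{W3} (see the proof of Theorem~3.10 in the paper) use \emph{exterior} powers $\wedge^k\Phi$ together with auxiliary limiting nuclear overconvergent $\sigma$-modules $\Phi_{\infty,-k}$:
\[
L(\varphi\otimes\tau^{-1},T)\;=\;\prod_{k\ge 1} L\bigl(\varphi\otimes\Phi_{\infty,-1-k}\otimes\wedge^k\Phi,\,T\bigr)^{(-1)^{k-1}k}.
\]
Here the convergence comes from the fact that when the slope-zero part of $\Phi$ has rank one, $\wedge^k\Phi$ is divisible by $\pi^{k-1}$, so the $k$th factor is automatically $1+O(\pi^{k-1}T)$ and the product converges without any negative twist. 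The construction of the $\Phi_{\infty,-k}$ (limits of symmetric/tensor constructions, not a single $\Sym^k$) and the nuclear trace formula for them are the genuinely hard inputs you allude to, but they are not captured by the symmetric-power formula you wrote.

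In short: correct philosophy, but the product formula and the convergence argument as stated would fail; replacing $\Sym^k$ by the exterior-power/nuclear-$\sigma$-module machinery of \cite{W2}\cite{W3} is not a cosmetic change but the crux of the proof.
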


The aim of this paper is to study Artin's entireness conjecture for such L-functions of 
geometric $p$-adic representations. Our main result is the following theorem. 

 \begin{thm} Assume that $\ell=p$ and $\rho$ comes from geometry with no geometrically trivial components.   
 Then, there is a positive constant $c(p,\rho)$ such that 
the L-function $L(U, \rho, T)$  is $p$-adic analytic (no poles)  in the larger disc  
$|T|_p < 1+ c(p, \rho)$. Furthermore, there are geometrically non-trivial rank one geometric $p$-adic representations 
$\rho$ such that $L(U, \rho, T)$ is not $p$-adic analytic (in fact having infinitely many poles) in $|T|_p <\infty$. 
\end{thm}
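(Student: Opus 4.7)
For the first assertion, I would combine the previous two theorems directly. By Theorem 1.3, $L(U,\rho,T)$ is $p$-adic meromorphic on $|T|_p<\infty$, hence expressible as $f(T)/g(T)$ with $f,g$ $p$-adic entire. By Theorem 1.2, this function has no poles on the closed unit disc, so all zeros of $g$ that survive cancellation with zeros of $f$ satisfy $|T|_p>1$. The zeros of a $p$-adic entire function form a discrete sequence whose absolute values tend to infinity, so the smallest absolute value $\alpha$ of a pole of $L(U,\rho,T)$ is well-defined and $\alpha>1$. Setting $c(p,\rho):=\alpha-1$ gives the claimed enlarged disc of analyticity.

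For the second assertion, the plan is to construct $\rho$ as the unit root (slope zero) piece of the first $p$-adic cohomology of a geometric family. Concretely, take the Legendre family $y^2=x(x-1)(x-t)$ of elliptic curves over the ordinary open subset $U\subset \AAA^1\setminus\{0,1\}$ (with $p$ odd). On $U$, the Newton polygon of $R^1 f_*\Zp$ has slopes $0$ and $1$, and the unit root summand defines a rank one continuous $p$-adic representation $\rho_u$ of $\pi_1^{\mathrm{arith}}(U)$ that is geometric in the sense of this paper. Geometric non-triviality follows because the restriction of $\rho_u$ to $\pi_1^{\mathrm{geom}}(U)$ factors through and surjects onto the Galois group of the Igusa tower, which is infinite; equivalently, the unit root Frobenius eigenvalues vary non-trivially in $p$-adic families along the ordinary locus.

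The main obstacle is proving that $L(U,\rho_u,T)$ has infinitely many poles in $|T|_p<\infty$. My strategy is to exploit the infinite Euler product decomposition used in the proof of Theorem 1.4, which expresses the unit root L-function as an alternating infinite product of L-functions of symmetric power representations $\mathrm{Sym}^k$ of an overconvergent rank two nuclear $F$-crystal attached to the family. Each such symmetric power L-function is entire (after dividing out trivial factors) with Newton slopes that grow linearly in $k$. If $L(U,\rho_u,T)$ had only finitely many poles, the infinite product would collapse to a rational function, imposing infinitely many cancellation constraints between Newton polygons of the symmetric power L-functions. I plan to rule this out by a slope comparison: estimate the low-slope zeros of the first few symmetric power factors, and show that no cancellation with higher $k$-factors is possible for slope reasons, thereby exhibiting infinitely many uncancelled poles. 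The delicate point is performing this slope bookkeeping uniformly in $k$, which will require sharp Newton polygon estimates for Frobenius on crystalline cohomology of symmetric powers, for instance via a Hodge--Newton style argument or the theory of $p$-adic modular forms; this is where I expect the analytic heart of the argument to lie.
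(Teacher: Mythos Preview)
Your soft argument is correct as far as the literal statement goes: meromorphy on $|T|_p<\infty$ plus analyticity on $|T|_p\le 1$ does force analyticity on some $|T|_p<1+c$ with $c>0$, because the absolute values of the poles of a $p$-adic meromorphic function form a discrete subset of $(0,\infty)$. But this produces a constant that is tautological, namely the distance to the first pole of that particular $L$-function, and gives no information at all. The paper's proof is structurally different and yields an \emph{explicit} $c(p,\rho)$ depending only on $p$ and an embedding datum of $\rho$. Concretely, if $\rho$ is the slope-zero piece of an ordinary overconvergent $\sigma$-module $\phi$ with uniformizer $\pi$, one writes $L(\phi,T)=L(\phi_0,T)\prod_{i\ge1}L(\phi_i,\pi^iT)$, observes that the left side is analytic on $|T|_p<|\pi^{-1}|_p$ by the Monsky trace formula while the factors with $i\ge1$ are units there, and concludes $L(\phi_0,T)$ is analytic on $|T|_p<|\pi^{-1}|_p$. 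Higher slope pieces are handled by a twisting trick and the nuclear overconvergent trace formula from \cite{W2}\cite{W3}. So your argument is valid but misses the actual content the paper is after.

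\textbf{Second assertion.} Here there is a genuine gap. Your plan is to rule out pole cancellation by Newton-slope bookkeeping across an infinite product of symmetric-power $L$-functions, but you do not carry this out, and it is unclear that the available slope estimates are sharp enough to prevent cancellation in infinitely many ranges. The paper takes a completely different, number-theoretic route that avoids slopes entirely. Working over the ordinary locus $Y$ of $X_1(Np)$ (with $p>2$, $N>4$, $p\nmid N$), Coleman's formula expresses $L(Y,\rho^{\otimes k},T)=D(k+2,T)/D(k,pT)$ where $D(k,T)$ is the Fredholm series of $U_p$ on overconvergent cusp forms; unwinding this gives the logarithmic coefficients
\[
C_m(k)=\sum_{\gamma\in W_{p,m}}\sum_{\mathcal{O}\in O_\gamma}h(\mathcal{O})\,B_N(\mathcal{O},\gamma)\,\gamma^k,
\]
a finite sum over $p$-adic units $\gamma$ generating imaginary quadratic fields in which $p$ splits. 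The key lemma is that, using linear independence of square roots over $\mathbb{Q}$, the numbers $C_m(k)$ (as $m$ varies) generate the compositum of \emph{all} imaginary quadratic fields in $\mathbb{Q}_p$ in which $p$ splits---an infinite extension of $\mathbb{Q}$. Hence $L(Y,\rho^{\otimes k},T)$ cannot be rational. Finally, the identity $L(Y,\rho^{\otimes k},T)\,L(Y,\rho^{\otimes(-k)},p^kT)=L(Y,\Sym^k\mathcal{F},T)/L(Y,\Sym^{k-2}\mathcal{F},pT)$, whose right side is rational, forces at least one of $L(Y,\rho^{\otimes k},T)$, $L(Y,\rho^{\otimes(-k)},T)$ to have infinitely many poles (else both would be rational). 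This arithmetic irrationality argument is the idea you are missing; your slope approach, even if it could be made to work, would be a different and harder proof.
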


The second part of the theorem shows that Artin's conjecture is false in the entire plane $|T|_p<\infty$. 
It shows that the first part of the theorem is best one can hope for, and Artin's conjecture is true 
in a larger disk than the closed unit disk for geometric $p$-adic representations. An interesting further question is how big the constant 
$c(p, \rho)$ can be. Our proof gives an explicit positive constant depending only on $p$ and 
some embedding rank of $\rho$. If $\rho$ comes from the slope zero part  of an ordinary overconvergent $F$-crystal on $U$ and 
the uniformizer of $R_p$ is $p$,  one can take $c(p, \rho) = p-1$ which is 
independent of $\rho$.

%that is, arising as the relative \'etale cohomology (or
%its subquotient) with compact support of a family of varieties $f: X \rightarrow U$.

\section{$\ell$-adic case: $\ell\not=p$}

Since $\ell\not=p$, the restriction of the $\ell$-adic
representation $\rho$ to $P_x$ is of finite order and thus the
representation $\rho$ is almost tame. In fact, by class field
theory, $\rho$ itself has finite order up to a twist if $\rho$ has
rank one. Thus, there are not too many such $\ell$-adic
representations. The L-function $L(U, \rho, T)$ is always a rational
function. This follows from Grothendieck's trace formula
\cite{Gr}:

\begin{thm} Let ${\cal F}_{\rho}$ denote the lisse $\ell$-adic sheaf on
$U$ associated with $\rho$. Then, there are finite dimensional
vector spaces $H_c^i(U\otimes \bar{\mathbb{F}}_q, {\cal F}_{\rho})$
($i=0,1,2$) over $F_{\ell}$ such that
$$L(U, \rho, T) = \prod_{i=0}^2 {\rm det}(I-{\rm Frob}_q T|H_c^i(U\otimes \bar{\mathbb{F}}_q, {\cal F}_{\rho}))^{(-1)^{i-1}} \in F_{\ell}(T).$$
\end{thm}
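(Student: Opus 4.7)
The plan is to follow Grothendieck's classical route: translate the Euler product $L(U,\rho,T)$ into a generating series of Frobenius trace sums over $\mathbb{F}_{q^n}$-rational points of $U$, then apply the $\ell$-adic Lefschetz trace formula for compact-support cohomology to rewrite those sums cohomologically, and finally re-exponentiate to obtain the alternating characteristic polynomial product.

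Concretely, I would first pass from $\rho$ to its associated lisse $F_\ell$-sheaf $\mathcal{F}_\rho$ on $U$ via the standard equivalence between continuous $\ell$-adic representations of $\pi_1^{\mathrm{arith}}(U)$ and lisse $F_\ell$-sheaves. Taking $-T\,d/dT\log$ of the Euler product and expanding $\log\det(1-AT)^{-1}=\sum_{n\geq 1}\Tr(A^n)T^n/n$ yields
$$-T\frac{d}{dT}\log L(U,\rho,T) \;=\; \sum_{n\geq 1} S_n T^n,$$
where $S_n=\sum_{x\in U(\mathbb{F}_{q^n})} \Tr\bigl(\rho(\mathrm{Frob}_x^{n/\deg(x)})\bigr)$ counts $\mathbb{F}_{q^n}$-valued points of $U$ weighted by traces of appropriate Frobenius powers.

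The heart of the argument is the Grothendieck--Lefschetz trace formula in the open, non-proper setting: for every $n\geq 1$ one has
$$S_n \;=\; \sum_{i=0}^{2}(-1)^i\,\Tr\bigl(\mathrm{Frob}_q^n \mid H_c^i(U\otimes\FFq,\mathcal{F}_\rho)\bigr).$$
Summing this identity against $T^n/n$, exponentiating both sides, and regrouping factors identifies $L(U,\rho,T)$ with the claimed alternating product of characteristic polynomials.

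The two external inputs I would cite rather than reprove are: (a) Artin's finiteness theorem, which guarantees that each $H_c^i(U\otimes\FFq,\mathcal{F}_\rho)$ is finite-dimensional over $F_\ell$ and vanishes outside $i=0,1,2$ because $\dim U = 1$; and (b) the trace formula itself in the compact-support setting, for which proper base change and the formalism of $Rf_!$ are essential. The main conceptual obstacle, compared with the smooth projective case, is precisely this compact-support modification: one cannot directly count Frobenius fixed points on a proper variety, but must instead work with $Rj_!\mathcal{F}_\rho$ on a compactification such as the curve $C$ and identify the relevant cohomology. Once the trace formula is in hand, the rationality $L(U,\rho,T)\in F_\ell(T)$ follows at once from the finite-dimensionality of the three cohomology groups.
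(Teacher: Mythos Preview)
Your sketch is correct and follows exactly Grothendieck's original argument: logarithmic differentiation of the Euler product, the Lefschetz trace formula with compact supports, and re-exponentiation, together with Artin's finiteness for $H_c^i$. The paper itself does not prove this theorem at all; it simply records it as Grothendieck's result and cites \cite{Gr}, so there is nothing further to compare.
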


If $U$ is affine, then $H_c^0=0$. If $\rho$ does not contain a
geometrically trivial component, then $H_c^2=0$. Thus, in most
cases, it is $H_c^1$ that is the most interesting.

\begin{cor}
Let $U$ be affine. Assume that $\rho$ does not contain a
geometrically trivial component. Then, the L-function
$$L(U, \rho, T) = {\rm det}(I-{\rm Frob}_q T|H_c^1(U\otimes \bar{\mathbb{F}}_q, {\cal F}_{\rho}))$$ is a polynomial.
\end{cor}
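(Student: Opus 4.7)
The plan is to apply the Grothendieck trace formula from Theorem 2.1 and show that, under the two hypotheses, the factors corresponding to $i=0$ and $i=2$ collapse to $1$, leaving only the $i=1$ factor, which appears with exponent $(-1)^{1-1}=+1$ and is therefore a polynomial.

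First I would handle $H_c^0$. Since $U$ is affine (hence in particular not proper over $\bar{\mathbb{F}}_q$), and compactly supported cohomology vanishes in degree $0$ for any lisse sheaf on a smooth connected affine curve over an algebraically closed field (as a nonzero compactly supported section on a non-complete variety would have to extend by zero and then be locally constant on a connected space, forcing the support to be all of $U$, contradicting non-properness), we get $H_c^0(U\otimes\bar{\mathbb{F}}_q,\mathcal{F}_\rho)=0$. Equivalently, one can invoke Poincaré duality, which identifies $H_c^0$ with the dual of $H^2$ on the smooth affine curve $U\otimes\bar{\mathbb{F}}_q$, and this $H^2$ vanishes by Artin vanishing for affine varieties of dimension one.

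Next I would handle $H_c^2$. By Poincaré duality for the smooth curve $U\otimes\bar{\mathbb{F}}_q$, one has an identification
\[
H_c^2(U\otimes\bar{\mathbb{F}}_q,\mathcal{F}_\rho)\;\cong\;H^0(U\otimes\bar{\mathbb{F}}_q,\mathcal{F}_\rho^{\vee})^{\vee}(-1).
\]
The space $H^0(U\otimes\bar{\mathbb{F}}_q,\mathcal{F}_\rho^{\vee})$ consists of geometric invariants, i.e.\ of the subspace of $V^{\vee}$ on which the geometric fundamental group $\pi_1^{\mathrm{geom}}(U)$ acts trivially. This is nonzero precisely when $\rho$ has a geometrically trivial quotient (equivalently, a geometrically trivial subrepresentation, by taking duals). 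Our assumption that $\rho$ has no geometrically trivial component rules this out, so $H_c^2=0$.

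Putting the two vanishings into the trace-formula expression of Theorem 2.1, the product reduces to the single factor
\[
L(U,\rho,T)=\det\bigl(I-\mathrm{Frob}_q\,T\mid H_c^1(U\otimes\bar{\mathbb{F}}_q,\mathcal{F}_\rho)\bigr)^{(-1)^{0}},
\]
a polynomial of degree $\dim_{F_\ell}H_c^1(U\otimes\bar{\mathbb{F}}_q,\mathcal{F}_\rho)$ in $F_\ell[T]$. The only step that requires genuine care is the $H_c^2$ vanishing, where one must be precise about what ``geometrically trivial component'' means and ensure that Poincaré duality is being applied with the correct finiteness hypotheses (lisse sheaf on a smooth curve), but both are standard; there is no real obstacle beyond carefully citing the relevant results.
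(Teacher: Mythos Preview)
Your proposal is correct and follows essentially the same approach as the paper: the paper simply notes (immediately after the trace formula) that $H_c^0=0$ because $U$ is affine and $H_c^2=0$ because $\rho$ has no geometrically trivial component, and then states the corollary without further argument. You have filled in the standard justifications for these two vanishings in more detail than the paper does, but the route is identical.
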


This is the $\ell$-adic function field analogue of Artin's
entireness conjecture.

Fix an embedding $\iota: \bar{\mathbb Q}_{\ell} \hookrightarrow {\mathbb C}$. A representation $\rho$ is called $\iota$-pure of
weight $w\in {\mathbb R}$ if each eigenvalue of ${\rm Frob}_x$
acting on $V$ has absolute value $q^{{\rm deg}(x)w/2}$ for all
$x\in |U|$. A representation $\rho$ is called $\iota$-mixed of
weights at most $w$ if each irreducible subquotient of $\rho$ is
$\iota$-pure of weights at most $w$. If $\rho$ is $\iota$-pure of
weight $w$ for every embedding $\iota$, then $\rho$ is called pure
of weight $w$. Similarly, if $\rho$ is $\iota$-mixed of weights at
most $w$ for every $\iota$, then $\rho$ is called mixed of weights
at most $w$. The fundamental theorem of Deligne \cite{De} on the
Weil conjectures implies

\begin{thm}
If $\rho$ is geometric, then $\rho$ is mixed with integral
weights. Furthermore, if $\rho$ is mixed of weights at most $w$,
then $H_c^i(U\otimes \bar{\mathbb{F}}_q, {\cal F}_{\rho})$ is mixed of
weights at most $w+i$.
\end{thm}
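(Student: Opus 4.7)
The plan is to reduce both assertions to Deligne's Weil~II \cite{De}, since by the definition recalled above a geometric $\rho$ arises as (a subquotient of) $R^if_*\QQ$ for some smooth proper morphism $f\colon X\to U$, and the statement about $H_c^i$ is the standard cohomological weight-bound.

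For the first assertion I would argue as follows. Unwinding the definition of geometric, there exists a smooth proper morphism $f\colon X\to U$ and an index $i\geq 0$ such that the lisse sheaf $\LL_\rho$ attached to $\rho$ is a subquotient of $R^if_*\QQ$. By proper and smooth base change, the stalk of $R^if_*\QQ$ at a geometric point above $x\in|U|$ is $H^i(X_{\bar x},\QQ)$, and Deligne's original Weil~I theorem for smooth projective varieties over finite fields shows that $\mathrm{Frob}_x$ acts on this stalk with eigenvalues that are algebraic integers of complex absolute value $(\#k_x)^{i/2}$. Consequently $R^if_*\QQ$ is pure of integral weight $i$ on $U$, and hence any of its subquotients is itself pure of weight $i$ (subrepresentations and quotients of a finite-dimensional representation inherit the eigenvalue set of Frobenius on the stalks). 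Taking the semisimplification of $\rho$ and applying this to each simple constituent separately shows $\rho$ is mixed with integral weights.

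For the second assertion I would invoke directly Deligne's fundamental theorem on the weights of compactly supported cohomology (Théorème~3.3.1 of Weil~II): if $\LL$ is a lisse mixed sheaf on a smooth curve $U/\Fq$ of weights $\leq w$, then $H_c^i(U\otimes\FFq,\LL)$ is mixed of weights $\leq w+i$. Applied to $\LL=\LL_\rho$, this gives exactly the claimed bound for every $i\in\{0,1,2\}$. No separate argument is needed at $i=0$ or $i=2$ since the mixedness statement handles all degrees uniformly; one may also observe that $H_c^0=0$ when $U$ is affine and that $H_c^2$ is controlled by coinvariants, consistent with the bound.

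I expect essentially no internal obstacle: modulo invoking the two deep results of Deligne (purity of $R^if_*$ for smooth proper $f$, and the weight filtration on $H_c^i$), the argument is bookkeeping. The only subtle point to verify carefully is that the definition of ``geometric'' adopted in the paper really allows passage to subquotients in the $\ell\neq p$ case (as the footnote after Question~1.2 makes explicit), so that one is entitled to intersect $\LL_\rho$ with $R^if_*\QQ$ and conclude purity. Once that is granted, integrality of weights comes for free from Weil~I.
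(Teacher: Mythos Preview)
Your proposal is correct and matches the paper's approach: the paper gives no proof beyond the sentence ``The fundamental theorem of Deligne \cite{De} on the Weil conjectures implies,'' and your sketch is precisely the standard unpacking of that citation (purity of $R^if_*\QQ$ for smooth proper $f$ from Weil~I/II, then the cohomological weight bound Th\'eor\`eme~3.3.1 of Weil~II). There is nothing further to compare.
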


The $\ell$-adic function field Langlands conjecture for $\mathrm{GL}(n)$, which was established
by Lafforgue \cite{La}, implies

\begin{thm}
If $\rho$ is irreducible, then $\rho$ is geometric up to a twist
and hence pure of some weight.

\end{thm}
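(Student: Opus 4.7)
The plan is to deduce the statement from Lafforgue's theorem, which establishes a bijection between irreducible cuspidal automorphic representations of $\mathrm{GL}_n(\mathbb A_K)$ with central character of finite order and irreducible $\ell$-adic Galois representations of $G_K$ of rank $n$ with determinant of finite order, all unramified outside a prescribed finite set. The geometricity will come from Lafforgue's realization of these Galois representations inside the $\ell$-adic cohomology of moduli stacks of Drinfeld shtukas, and the purity will follow from the Ramanujan--Petersson property of the matching automorphic representation combined with Deligne's weight theorem (Theorem 2.2 in the excerpt).

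First I would reduce the irreducible $\rho$ to one with finite-order determinant by a twist. Consider $\det\rho: G_K \to F_\ell^\times$. By global class field theory, abelian continuous $\ell$-adic characters of $G_K$ correspond to continuous characters of the idele class group, and any such character can be written uniquely up to finite order as a power of an unramified character such as an integral power of the cyclotomic/Frobenius-valued character coming from $\mathrm{Gal}(\overline{\Fq}/\Fq)$. After enlarging the coefficient field $F_\ell$ so that $n$-th roots of the unramified part exist, one produces a continuous character $\chi: G_K \to F_\ell^\times$ with $\chi^n = \det\rho \cdot \eta^{-1}$ for some character $\eta$ of finite order. Then $\rho' := \rho \otimes \chi^{-1}$ is still irreducible of rank $n$ and has determinant of finite order.

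Next I would invoke Lafforgue \cite{La}: the irreducible $\rho'$ with finite-order determinant corresponds to an irreducible cuspidal automorphic representation $\pi$ of $\mathrm{GL}_n(\mathbb A_K)$, and moreover this correspondence is realized in the cohomology of the moduli stack of rank $n$ shtukas with appropriate level structure. Concretely, $\rho'$ appears (up to semisimplification and Tate twist) as a constituent of $H^*_c$ of an open piece of this moduli space, which is a smooth (Deligne-Mumford) stack over an open $U$ of $C$; passing to a finite etale cover or suitable truncation of the Harder-Narasimhan stratification yields a smooth proper variety whose $\ell$-adic cohomology realizes $\rho'$. Thus $\rho'$ is geometric in the sense of the paper, and hence $\rho = \rho' \otimes \chi$ is geometric up to the twist by $\chi$.

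For purity, since $\rho'$ is geometric I can apply Theorem 2.2: $\rho'$ is mixed with integral weights, and irreducibility upgrades ``mixed'' to ``pure of some weight $w'$''. The twisting character $\chi$, being one-dimensional and geometric up to a further finite-order twist (again by class field theory and the structure of characters of $\pi_1^{\mathrm{arith}}(U)^{\mathrm{ab}}$), is itself pure of some weight $w''$, so $\rho$ is pure of weight $w' + w''$. The main obstacle throughout is of course invoking Lafforgue's theorem and its geometric content concerning moduli of shtukas; everything else is a packaging step using class field theory and Deligne's Weil~II. No new ideas beyond these inputs are required.
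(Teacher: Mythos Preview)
Your proposal is correct and follows exactly the route the paper indicates: the paper gives no proof of its own for this theorem but simply attributes it to Lafforgue's work \cite{La}, and your write-up is a reasonable unpacking of how that citation yields the statement (twist to finite-order determinant, invoke the Langlands correspondence and its cohomological realization, then apply Deligne's weight theorem). Note that the paper's footnote explicitly allows subquotients in the definition of ``geometric'' when $\ell\neq p$, which is what makes your passage from the cohomology of shtuka stacks to the paper's notion of geometricity go through without further effort.
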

Thus, in the $\ell$-adic case with $\ell\not=p$, essentially all
$\ell$-adic representations are geometric from the viewpoint of L-functions.

\section{$p$-adic case}

In the case $\ell=p$, the restriction of the $p$-adic
representation $\rho$ to $P_x$ can be infinite and thus $\rho$ can
be very wildly ramified. The L-function $L(U, \rho, T)$ is naturally
more complicated and cannot be rational in general. One can ask
for its $p$-adic meromorphic continuation. The function $L(U, \rho, T)$ is trivially 
$p$-adic analytic in the open unit disc $|T|_p <1$ as the coefficients are in the ring $R_p$. It was shown in
\cite{W1} that $L(U, \rho, T)$ is not $p$-adic meromorphic in
general,  disproving a conjecture of Katz \cite{Ka}. However, one
can show that $L(U,\rho, T)$ is $p$-adic meromorphic on the closed
unit disc $|T|_p\leq 1$. Its zeros and poles on the closed unit
disc are controlled by $p$-adic \'etale cohomology of $\rho$. This
was proved by Emerton-Kisin \cite{EK}, confirming a conjecture of
Katz \cite{Ka}. That is,

\begin{thm}
For any $p$-adic representation $\rho$ of $\pi_1^{\rm arith}(U)$,
the quotient
$$\frac{L(U, \rho, T)}{\prod_{i=0}^2 {\rm det}(I-{\rm Frob}_q T|H_c^i(U\otimes \bar{\mathbb{F}}_q, 
{\cal F}_{\rho}))^{(-1)^{i-1}}}$$ has no zeros and poles on the closed
unit disc $|T|_p\leq 1$.
\end{thm}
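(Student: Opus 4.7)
The plan is to establish a $p$-adic Grothendieck-type trace formula for $L(U,\rho,T)$ on the closed unit disc by interpolating the classical mod-$p^n$ \'etale trace formula across all torsion levels. First, reduction modulo $p^n$: since the coefficients of $L(U,\rho,T)$ lie in $R_p$, we have $L(U,\rho,T)\equiv L(U,\rho_n,T)\pmod{p^n}$ for each $n\geq 1$, where $\rho_n:=\rho\bmod p^n$ factors through the finite group $GL_d(R_p/p^n)$. The classical Grothendieck trace formula for constructible $R_p/p^n$-sheaves then yields
$$L(U,\rho_n,T)=\prod_{i=0}^{2}\det\left(I-{\rm Frob}_q T\mid H_c^i(U\otimes\bar{\mathbb F}_q,\mathcal F_{\rho_n})\right)^{(-1)^{i-1}}$$
in $(R_p/p^n)[[T]]$. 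Note that in the $\ell=p$ torsion setting the groups $H_c^i(\mathcal F_{\rho_n})$ need not be finite-dimensional (Artin--Schreier phenomena in characteristic $p$), so the individual factors are power series rather than polynomials; nevertheless the trace formula holds as an identity of formal power series.

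Next, following Emerton--Kisin, I would construct the $p$-adic continuous \'etale cohomology $H_c^i(U\otimes\bar{\mathbb F}_q,\mathcal F_\rho)$ as an appropriate derived inverse limit of the torsion cohomologies $H_c^i(\mathcal F_{\rho_n})$, equipped with a continuous Frobenius action. The central technical point is to show that this Frobenius acts nuclearly/compactly, so that $\det(I-{\rm Frob}_q T\mid H_c^i(\mathcal F_\rho))$ makes sense as a $p$-adic analytic function on $|T|_p\leq 1$ and interpolates the mod-$p^n$ characteristic series. The torsion and limit cohomologies are then compared via the short exact sequences
$$0\to H_c^i(\mathcal F_\rho)/p^n\to H_c^i(\mathcal F_{\rho_n})\to H_c^{i+1}(\mathcal F_\rho)[p^n]\to 0$$
arising from multiplication by $p^n$ on $\mathcal F_\rho$: upon taking the alternating product of characteristic series, the torsion contributions from adjacent degrees telescope modulo $p^n$.

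Combining the two steps yields, for every $n\geq 1$, the congruence
$$L(U,\rho,T)\equiv\prod_{i=0}^{2}\det\left(I-{\rm Frob}_q T\mid H_c^i(U\otimes\bar{\mathbb F}_q,\mathcal F_\rho)\right)^{(-1)^{i-1}}\pmod{p^n}$$
as formal power series. Since both sides converge as $p$-adic analytic functions on $|T|_p\leq 1$ (the $L$-function trivially, the cohomological side by the nuclearity established in Step 2), the quotient equals $1$ as a $p$-adic analytic function on the closed unit disc, and in particular has neither zeros nor poles there. The main obstacle is the second step: constructing the $p$-adic continuous \'etale cohomology with a well-defined nuclear Frobenius whose characteristic series converges on $|T|_p\leq 1$. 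This cannot be done by taking limits of finite-dimensional linear algebra data, because the torsion cohomologies fail to be finite-dimensional in characteristic $p$; overcoming this is the principal technical contribution of Emerton--Kisin.
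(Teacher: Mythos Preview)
The paper does not supply its own proof of this theorem: it is quoted as the main result of Emerton--Kisin \cite{EK} (with the rank one case attributed to Crew \cite{Cr}), so there is no ``paper's proof'' to compare against. What I can do is evaluate your sketch against what Emerton--Kisin actually do.

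Your outline has the right overall shape (reduce mod $p^n$, establish a trace formula at each finite level, pass to the limit), but Step~1 as you state it does not go through. There is no ``classical Grothendieck trace formula for constructible $R_p/p^n$-sheaves'' in characteristic $p$: the usual proof of the trace formula relies essentially on the cohomology groups being finite, and once $H_c^i(\mathcal F_{\rho_n})$ is infinite-dimensional the symbol $\det(I-{\rm Frob}_qT\mid H_c^i)$ has no a priori meaning, not even as a formal power series. Your parenthetical ``nevertheless the trace formula holds as an identity of formal power series'' is exactly the statement that needs to be proved, not an input one can invoke. So the gap is already in Step~1, not only in the limit Step~2 where you locate it.

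What Emerton--Kisin do instead is pass through a Riemann--Hilbert type correspondence: \'etale $\mathbb{Z}/p^n$-sheaves on $U$ are matched with locally free $\mathcal{O}_{U}$-modules equipped with a unit Frobenius structure, and the $L$-function is then computed via coherent (Zariski) cohomology together with the Cartier operator. On the coherent side the relevant cohomology groups \emph{are} finite over $R_p/p^n$, Serre duality is available, and a genuine trace formula holds; one then takes the limit over $n$. So the decisive idea missing from your plan is this change of site from \'etale to coherent cohomology via the unit-root $F$-crystal correspondence, which is what makes the characteristic series well-defined at each torsion level in the first place.
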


In the case that $\rho$ has rank one, this was first proved by
Crew \cite{Cr}. Note that $H_c^2(U\otimes \bar{\mathbb{F}}_q, {\cal F}_{\rho})=0$ since $U$ is a curve and $\ell=p$.
If $U$ is affine, then $H_c^0(U\otimes \bar{\mathbb{F}}_q, {\cal F}_{\rho})=0$. This gives 

\begin{cor}
Let $U$ be affine. Then, the L-function
$L(U, \rho, T)$ is $p$-adic analytic on the closed unit disc $|T|_p\leq 1$. 
\end{cor}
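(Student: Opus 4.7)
The plan is to deduce this corollary directly from Theorem 3.1 (Emerton–Kisin) combined with the two cohomological vanishings already recalled immediately above the statement.

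First I would apply Theorem 3.1 to write
\[
L(U,\rho,T) = E(T)\cdot \prod_{i=0}^{2}\det\bigl(I-\mathrm{Frob}_q T\mid H_c^i(U\otimes\FFq,\mathcal{F}_\rho)\bigr)^{(-1)^{i-1}},
\]
where $E(T)$ is a $p$-adic analytic unit on the closed unit disc $|T|_p\le 1$, meaning it is analytic and has no zeros there (and in particular no poles). The three determinant factors are polynomials, hence entire; the only ones that could contribute poles to the product come from the terms with $i$ even, namely $i=0$ and $i=2$.

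Next I would eliminate those two terms. Since $\ell=p$ and $U$ is a (smooth, affine) curve, the paragraph preceding the corollary records the vanishing $H_c^2(U\otimes\FFq,\mathcal{F}_\rho)=0$ (an Artin-type vanishing for $p$-adic étale cohomology of an affine scheme in characteristic $p$). Since $U$ is affine and (nonempty, geometrically connected), it has no proper connected components, and so $H_c^0(U\otimes\FFq,\mathcal{F}_\rho)=0$ for any lisse sheaf $\mathcal{F}_\rho$. Substituting both vanishings into the factorization above, the Euler product collapses to
\[
L(U,\rho,T) = E(T)\cdot \det\bigl(I-\mathrm{Frob}_q T\mid H_c^1(U\otimes\FFq,\mathcal{F}_\rho)\bigr).
\]

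The right-hand side is the product of an analytic unit on $|T|_p\le 1$ with a genuine polynomial in $T$; neither factor has a pole on the closed unit disc. Therefore $L(U,\rho,T)$ is itself $p$-adic analytic on $|T|_p\le 1$, as claimed. There is essentially no real obstacle in this argument: all of the depth lives inside Theorem 3.1, and the corollary is only the pleasant specialization of that theorem to the affine curve case, once one notes that both the $H_c^0$ and the $H_c^2$ contributions — the only ones capable of producing poles — are forced to vanish.
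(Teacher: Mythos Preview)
Your proposal is correct and follows exactly the paper's own argument: apply Theorem~3.1 and then kill the $i=0$ and $i=2$ factors using the two vanishings recorded just before the corollary. The only microscopic difference is that the paper attributes $H_c^2=0$ to ``$U$ is a curve and $\ell=p$'' rather than to affineness, but this does not affect the logic.
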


The (compatible) $p$-adic analogue of a lisse $\ell$-adic sheaf (or $\ell$-adic 
representation) on $U$ for $\ell\not =p$ is  an overconvergent $F$-isocrystal over $U$, 
which is not a $p$-adic representation. Its pure slope parts, under the Newton-Hodge decomposition, 
are $p$-adic representations up to twists (unit root F-isocrystals, no longer overconvergent in general). 
$P$-adic representations arising in this way are also called \emph{geometric}, as they are a natural generalization 
of the geometric representations we defined before. 
For geometric $p$-adic representations, the following meromorphic
continuation was conjectured by Dwork \cite{Dw} and proved by the second author 
\cite{W2} \cite{W3}.

\begin{thm}
If the $p$-adic representation $\rho$ is geometric, then the
L-function $L(U, \rho, T)$ is $p$-adic meromorphic everywhere.
\end{thm}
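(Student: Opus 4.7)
The plan is to reduce to the slope-zero unit-root case via the Newton-Hodge decomposition, express the resulting L-function as a limit of L-functions of overconvergent $F$-isocrystals (which are meromorphic by the Monsky-Washnitzer/Berthelot trace formula), and then push the limit through using a $p$-adic Fredholm continuity theorem for nuclear $\sigma$-modules.

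First, by the definition of ``geometric'' in this $p$-adic context, $\rho$ arises as the pure slope $s$ part of some overconvergent $F$-isocrystal $\mathcal{M}^{\sharp}$ on $U$. After a Tate twist, which only changes $L(U,\rho,T)$ by a variable substitution of the form $T \mapsto q^s T$, one may reduce to the case $s=0$. Write $\mathcal{M} = \mathcal{M}^{\mathrm{unit}}$ for the slope-zero part of $\mathcal{M}^{\sharp}$; this is a unit-root $F$-crystal which is in general \emph{not} overconvergent, and it is precisely this failure of overconvergence that accounts for all the difficulty.

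Second, for overconvergent objects the rigid cohomology trace formula yields
$$L(U, \mathcal{N}, T) = \prod_{i=0}^{2} \det\bigl(1 - \mathrm{Frob}\cdot T \mid H^i_{c,\mathrm{rig}}(U, \mathcal{N})\bigr)^{(-1)^{i-1}}$$
for any overconvergent $F$-isocrystal $\mathcal{N}$ on $U$; since the rigid cohomology groups are finite-dimensional, each factor is a polynomial and $L(U,\mathcal{N},T)$ is rational, hence meromorphic everywhere. To exploit this in the unit-root setting I would use Dwork's formal identity expressing the unit-root L-function as a (generally infinite) alternating product involving the overconvergent symmetric powers $\Sym^k \mathcal{M}^{\sharp}$ for $k \geq 0$, combined with Monsky's approach of realizing each $L(U, \Sym^k \mathcal{M}^{\sharp}, T)$ as the Fredholm determinant of a completely continuous $\sigma$-linear operator $\Psi_k$ on a $p$-adic Banach space of overconvergent sections.

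The main obstacle, and the real content of Wan's proof, lies in controlling the limit $k \to \infty$: one must show that the family $\{\Psi_k\}_{k \geq 0}$ fits into a single nuclear $\sigma$-module over a larger $p$-adic base whose specialization to each $k$ recovers $\Psi_k$, and that the generic characteristic series of this family, when specialized appropriately, equals $L(U, \mathcal{M}, T)$. The decisive analytic ingredient is a uniform lower bound on the Newton polygons of the Fredholm determinants of the $\Psi_k$ on each bounded slope interval, which forces the coefficient-wise limit to be the ratio of two $p$-adic entire series. Establishing this uniform slope estimate, via a careful analysis of Hodge-Newton bounds for overconvergent $\sigma$-modules parameterized by $k$, is the technical heart of the argument. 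Once this ``Key Lemma'' is in place, the meromorphic continuation of $L(U, \rho, T)$ to the entire plane $|T|_p < \infty$ follows by combining the limit formula with the rigid cohomology trace formula applied to each term of the sequence.
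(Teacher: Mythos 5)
The paper itself does not prove this theorem: it is Dwork's conjecture, stated with a citation to \cite{Dw} for the conjecture and to \cite{W2}, \cite{W3} for the proof. Measured against those papers, your proposal is a correct road map rather than a proof. You correctly identify the structural ingredients --- reduction to the slope-zero unit-root part via the Hodge--Newton decomposition and twisting, Dwork's limiting idea for expressing the non-overconvergent unit-root L-function through overconvergent data, a trace formula making each overconvergent term meromorphic (in fact entire up to finitely many factors), and a uniform estimate allowing passage to the limit --- but each of these is asserted, not established. The ``Key Lemma'' you invoke (uniform Newton-polygon bounds forcing the coefficientwise limit to be a ratio of entire series) is precisely the content of the infinite-rank nuclear overconvergent machinery of \cite{W2}: one must actually construct the limiting $\sigma$-modules $\Phi_{\infty,k}$, prove they are nuclear and overconvergent, and prove the infinite-rank trace formula for them (Theorem 5.8 of \cite{W2}). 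None of that is sketched beyond its name, and it is the entire difficulty, since the unit-root module itself is not overconvergent and no finite-rank trace formula applies to it.

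There is also a concrete structural gap: your limiting formula uses only the symmetric powers $\Sym^k\mathcal{M}^{\sharp}$, which isolates the unit-root L-function only when the unit-root part has rank one (the setting of \cite{W2}, and of Coleman's formula used elsewhere in this paper). For a pure-slope part of rank greater than one, symmetric powers of the ambient overconvergent isocrystal do not suffice; the higher-rank case of \cite{W3} requires the further d\'evissage of twisting by the rank-one unit-root module $\tau=\wedge^{r_0}\phi_0\otimes\cdots\otimes\wedge^{r_{i-1}}\phi_{i-1}$ and passing to a suitable exterior power of $\phi$ so that the new slope-zero part has rank one, after which one applies the decomposition of Theorem 7.8 of \cite{W3} (the very identity quoted later in this paper in the proof of the entireness theorem). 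Finally, the offhand reduction ``after a Tate twist to slope zero'' and the ordinariness hypothesis implicit in your Newton--Hodge step both need justification: the general non-ordinary case is handled in \cite{W2}\cite{W3} by a separate argument, not by a simple twist. So the outline points in the right direction, but the decisive analytic and structural steps are missing rather than proved.
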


\begin{rem}  It would be interesting to know 
if a sub-quotient of a geometric $p$-adic representation remains geometric in terms of our general definition. 
\end{rem}

Unlike the $\ell$-adic case, most $p$-adic representations are not
geometric. It seems very difficult to classify geometric $p$-adic
representations, even in the rank one case. This may be viewed as
the $p$-adic Langlands program for function fields of characteristic $p$, which is still wide
open\footnote{The $p$-adic Langlands correspondence for overconvergent $F$-isocrystals was recently studied by Tomoyuki Abe. 
This is compatible with the $\ell$-adic situation with $\ell\not =p$.  In our framework of convergent geometric 
unit root $F$-isocrystals, no one knows how to formulate the correspondence  yet,  even in the rank one case. In fact, 
our negative result below on the Artin entireness conjecture suggests something completely new happens in this new situation. }.

Our first new result of this paper is to show that the Artin entireness conjecture fails for
L-functions $L(U, \rho, T)$ of geometric $p$-adic representations,
even for non-trivial rank one $\rho$. 

\begin{thm} There are geometrically non-trivial rank one geometric $p$-adic representations $\rho$ on certain affine curves $U$ 
over $\mathbb{F}_p$ such that the L-function $L(U, \rho, T)$ is $p$-adic meromorphic on $|T|_p<\infty$, but having 
infinitely many poles. 
\end{thm}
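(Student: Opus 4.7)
The plan is to produce the desired $\rho$ as the slope zero part of the Newton--Hodge slope filtration of an ordinary overconvergent rank two $F$-isocrystal $V$ on a suitable affine curve $U$ over $\mathbb F_p$. A natural choice is the relative first crystalline cohomology of the Legendre family of elliptic curves on its ordinary locus, or an analogous one-parameter hypergeometric/Dwork family. On the ordinary locus, the slope filtration gives a short exact sequence
\[
0 \to V_0 \to V \to V_1 \to 0
\]
of $F$-isocrystals, with $V_0, V_1$ of rank one and slopes $0, 1$ respectively. Let $\rho$ be the rank one $p$-adic representation of $\pi_1^{\rm arith}(U)$ attached to the unit root $V_0$; non-isotriviality of the family forces $\rho$ to be geometrically non-trivial.

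The first step reduces the problem of exhibiting infinitely many poles to non-rationality. By the theorem of Wan cited above (meromorphic continuation for geometric $p$-adic representations), $L(U, \rho, T) = L(U, V_0, T)$ is $p$-adic meromorphic on $|T|_p < \infty$. Since $V$ is overconvergent, the Etesse--Le Stum cohomological formula makes $L(U, V, T)$ a rational function in $T$. Multiplicativity of Frobenius characteristic polynomials in short exact sequences gives
\[
L(U, V, T) = L(U, V_0, T) \cdot L(U, V_1, T),
\]
so it suffices to show that $L(U, V_0, T)$ is not a rational function, i.e.\ that it has infinitely many zeros or poles.

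The second step produces infinitely many poles (not merely zeros) outside the closed unit disk. By Crew and Emerton--Kisin the zeros and poles of $L(U, V_0, T)$ inside $|T|_p \le 1$ are controlled by the $p$-adic \'etale cohomology of $\rho$ and are finite in number, so it is enough to produce infinitely many zeros or poles with $|T|_p > 1$. I would do this by appealing to Wan's Dwork-style limiting decomposition of $L(U, V_0, T)$ as an alternating product of characteristic power series of completely continuous Frobenius operators on $p$-adic Banach spaces built from the symmetric powers $\mathrm{Sym}^k V$ as $k \to \infty$. A Newton-polygon analysis of these characteristic series should then exhibit reciprocal zeros whose $p$-adic valuations tend to $-\infty$, giving infinitely many poles of $L(U, V_0, T)$ at $|T|_p > 1$.

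The main obstacle is ruling out cancellation between the reciprocal zeros and poles of adjacent factors in this decomposition, which is equivalent to non-rationality of $L(U, V_0, T)$. I would address this by an asymptotic Newton-polygon count that shows the total number of uncancelled contributions must grow without bound, or alternatively by an explicit computation in a small-prime special case, such as the Legendre family at $p = 3$, where Wan's slope estimates for the Fredholm determinants of symmetric powers become tractable and the existence of infinitely many poles can be verified by hand.
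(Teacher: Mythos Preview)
Your overall setup mirrors the paper's: an elliptic family on its ordinary locus, the rank one unit root part $\rho$ of the rank two overconvergent $F$-isocrystal, and the factorization of a rational $L$-function as a product of the two slope pieces. The paper uses the universal elliptic curve over the ordinary locus of $X_1(Np)$ rather than the Legendre family, but this is inessential. The reduction in your first step is also essentially the paper's: from the rationality of the full $L$-function one deduces that if both slope pieces had only finitely many poles they would both be rational, so the existence of infinitely many poles for \emph{some} rank one geometric $\rho$ reduces to the non-rationality of $L(U,V_0,T)$.

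The genuine gap is exactly at the step you yourself flag as the main obstacle: establishing that non-rationality. Your proposed Newton-polygon count of reciprocal zeros of Fredholm determinants of $\mathrm{Sym}^k V$ does not, as stated, rule out the cancellation you worry about; the slopes of these characteristic series for varying $k$ interlace in a complicated way, and an asymptotic slope count alone cannot distinguish a genuinely transcendental alternating product from a rational one. The fallback of an explicit small-prime computation is not a proof strategy either, since one would still have to certify infinitely many uncancelled poles.

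The paper's resolution is entirely different and bypasses slope estimates. Via the Monsky trace formula and Coleman's Eichler--Selberg style description of the $U_p$-operator on overconvergent cusp forms, the logarithmic-derivative coefficients $C_m(k)$ of $L(Y,\rho^{\otimes k},T)$ are written explicitly as finite sums $\sum_{\gamma} c_\gamma\,\gamma^k$ over $p$-adic unit Weil numbers $\gamma$ generating imaginary quadratic fields in which $p$ splits, with positive integer weights $c_\gamma$. A linear-independence-of-square-roots argument (Besicovitch) then shows that the field generated over $\mathbb{Q}$ by all the $C_m(k)$ is the compositum of \emph{all} such imaginary quadratic fields, hence an infinite extension of $\mathbb{Q}$. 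A rational $L$-function would have coefficients in a fixed number field, so this forces non-rationality. The passage to infinitely many poles is then completed by the symmetric argument: since $L(\rho^{\otimes k},T)\,L(\rho^{\otimes(-k)},p^kT)$ is rational, if both factors had only finitely many poles both would be rational, contradicting the coefficient-field lemma; hence at least one of $\rho^{\otimes k}$, $\rho^{\otimes(-k)}$ has an $L$-function with infinitely many poles.
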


{\bf Proof}. Let $p>2$ be an odd prime and $N>4$ be a positive integer prime to $p$. 
Let $Y$ be the component of ordinary non-cuspidal locus of the modulo $p$ reduction of the compactified modular curve $X_1(Np)$. 
This is an affine curve over the finite field $\mathbb{F}_p$. Let $E_1(Np)$ be the universal elliptic curve over $Y$. 
Its relative $p$-adic \'etale cohomology is a rank one geometric $p$-adic representation $\rho$ of $\pi_1^{\rm arith}(Y)$. 
For a non-zero integer $k$, the $k$-th tensor power $\rho^{\otimes k}$ is 
again a geometric $p$-adic representation of $\pi_1^{\rm arith}(Y)$. 
The Monsky trace formula 
 gives the following relation
 \begin{equation}
 \label{E: unitL}
L(Y, \rho^{\otimes k}, T) =  \frac{D(k+2, T)}{D(k, pT)} , 
\end{equation}
where $D(k,T)$ is the characteristic power series of the $U_p$-operator acting on the space of overconvergent 
$p$-adic cusp forms of weight $k$ and tame level $N$. The series $D(k, T)$ is a $p$-adic entire function.  
Equation \eqref{E: unitL} implies that the L-function $L(Y, \rho^{\otimes k}, T)$ is $p$-adic meromorphic 
in $T$, which was first proved by Dwork in \cite{Dw71} via Monsky's trace formula, see also \cite{Ka73} 
and \cite{Co}. 

We want to show that the L-function $L(Y, \rho^{\otimes k}, T)$ is not $p$-adic entire 
for infinitely many integers $k$. For this purpose, we need to describe the coefficients of the L-function 
in more detail, following Coleman \cite[Appendix I]{Co}.  In the following, we fix an embedding $\iota_p:\overline{\mathbb{Q}}\hookrightarrow\overline{\mathbb{Q}}_p$.

For an order $\cal{O}$ in a number field, let $h(\cal{O})$ denote the class number of $\cal{O}$. 
If $\gamma$ is an algebraic integer, let $O_{\gamma}$ be the set of orders in $\mathbb{Q}(\gamma)$ containing 
$\gamma$. For a positive integer $m$, let $W_{p,m}$ denote the finite set of $p$-adic units  
$\gamma\in \mathbb{Q}_p$ such that $\mathbb{Q}(\gamma)$ is an imaginary quadratic field,  
$\gamma$ is an algebraic integer 
and
$${\text{Norm}}^{\mathbb{Q}(\gamma)}_{\mathbb{Q}}(\gamma)=p^m.$$
By Coleman \cite[Theorem I1]{Co}, for all integers $k$, we have 
$$D(k, T) = \exp( \sum_{m=1}^{\infty} A_m(k) \frac{T^m}{m}),$$
where 
\[
A_m(k) = \sum_{\gamma \in W_{p,m}} \sum_{\mathcal{O} \in O_{\gamma}} h({\cal{O}})
B_N({\cal{O}}, \gamma) \frac{\gamma^k}{\gamma^2 -p^m},
\]
and $B_N(\cal{O}, \gamma)$ is the number of elements of ${\cal{O}}/N{\cal{O}}$ of 
order $N$ fixed under multiplication by $p^m/\gamma$. This is really another form of the Monsky trace formula. 
It follows that 
\[
L(Y, \rho^{\otimes k}, T) =\exp(\sum_{m=1}^{\infty} C_m(k)\frac{T^m}{m}), 
\]
where 
\[
C_m(k) = A_m(k+2) - A_m(k)p^m  =  \sum_{\gamma \in W_{p,m}} \sum_{\mathcal{O} \in O_{\gamma}} h({\cal{O}})
B_N({\cal{O}}, \gamma) \gamma^k.
\] 
It is clear that $C_m(k)$ is an algebraic number 
in $\mathbb{Q}_p$. To proceed, we first recall the following basic fact regarding linear independence of square roots of integers. 

\begin{lemma}\label{L:linear-independence}
Let $1\leq n_1<n_2<\dots<n_l$ be square free integers. Suppose $a_1,\dots, a_l\in\mathbb{Q}$. If $a_i\neq 0$ for some $i$,
then $a_1\sqrt{n_1}+\dots+a_l\sqrt{n_l}\neq0$.
\end{lemma}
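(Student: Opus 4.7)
The plan is to deduce the lemma from the standard description of a $\mathbb{Q}$-basis of a multi-quadratic extension. Let $p_1 < p_2 < \cdots < p_r$ denote all the prime divisors of the product $n_1 n_2 \cdots n_l$. Since each $n_i$ is square-free, there is a unique subset $S_i \subseteq \{1, \ldots, r\}$ with
\[ n_i = \prod_{j \in S_i} p_j, \qquad \sqrt{n_i} = \prod_{j \in S_i} \sqrt{p_j}, \]
where $S_1 = \emptyset$ if $n_1 = 1$. The subsets $S_1, \ldots, S_l$ are pairwise distinct because the integers $n_i$ are.

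The substantive claim is that, inside $K := \mathbb{Q}(\sqrt{p_1}, \ldots, \sqrt{p_r})$, the $2^r$ products $\prod_{j \in S} \sqrt{p_j}$ indexed by subsets $S \subseteq \{1, \ldots, r\}$ form a $\mathbb{Q}$-basis. Granting this, our $\sqrt{n_1}, \ldots, \sqrt{n_l}$ are $l$ distinct basis vectors of $K$ over $\mathbb{Q}$, so any non-trivial $\mathbb{Q}$-linear combination of them is non-zero, which is exactly the conclusion of the lemma.

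The main work is therefore the basis claim, equivalently $[K : \mathbb{Q}] = 2^r$, which I would prove by induction on $r$. The inductive step reduces to showing that $\sqrt{p_r} \notin K_{r-1} := \mathbb{Q}(\sqrt{p_1}, \ldots, \sqrt{p_{r-1}})$. For odd $p_r$ this follows from ramification considerations: $p_r$ ramifies in $\mathbb{Q}(\sqrt{p_r})/\mathbb{Q}$, whereas the set of primes ramified in $K_{r-1}/\mathbb{Q}$ is contained in $\{2, p_1, \ldots, p_{r-1}\}$, which does not include $p_r$. The case $p_r = 2$ is the only slightly delicate point and would be the main technical obstacle; it can be handled by arranging the induction to put $p_1 = 2$ first, so that at every subsequent step we adjoin only the square root of an odd prime, or alternatively by invoking Kummer theory, which identifies $[K:\mathbb{Q}]$ with the order of the subgroup of $\mathbb{Q}^\times/(\mathbb{Q}^\times)^2$ generated by $p_1, \ldots, p_r$; that subgroup is free of $\mathbb{F}_2$-rank $r$ because distinct primes are independent modulo squares by unique factorization.
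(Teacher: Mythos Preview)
Your argument is correct and follows the same overall structure as the paper's: both reduce the statement to the linear independence over $\mathbb{Q}$ of the products $\prod_{j\in S}\sqrt{p_j}$ as $S$ ranges over subsets of the set of primes dividing $n_1\cdots n_l$. The only difference is in how that key fact is justified: the paper simply invokes Besicovitch's theorem on linear independence of fractional powers of integers, whereas you supply a self-contained proof that $[\mathbb{Q}(\sqrt{p_1},\dots,\sqrt{p_r}):\mathbb{Q}]=2^r$ via ramification (or, more cleanly, via Kummer theory and unique factorization). Your version has the advantage of being self-contained; the paper's has the advantage of brevity and of pointing to a result that covers higher roots as well.
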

\begin{proof}
Suppose $p_1,\dots,p_s$ are the prime factors of all $n_i$. Then we may write 
\[
a_1\sqrt{n_1}+\dots+a_l\sqrt{n_l}=f(\sqrt{p_1},\dots,\sqrt{p_s}),
\]
where $f(y_1,\dots, y_s)$ is a polynomial with rational coefficients of degree at most 1 with respect to every $y_j$. Now the lemma follows from the main result of \cite{Be}.
\end{proof}

\begin{cor}
Keep notations as in Lemma \ref{L:linear-independence}. Suppose none of $a_1,\dots, a_l$ are $0$. Then 
\[
\mathbb{Q}(\sqrt{n_1},\dots,\sqrt{n_l})=\mathbb{Q}(a_1\sqrt{n_1}+\dots+a_l\sqrt{n_l}).
\]
\end{cor}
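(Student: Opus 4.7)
The plan is a short Galois-theoretic argument built directly on top of Lemma \ref{L:linear-independence}. Set $K := \mathbb{Q}(\sqrt{n_1},\dots,\sqrt{n_l})$ and $\alpha := a_1\sqrt{n_1}+\dots+a_l\sqrt{n_l}$. The inclusion $\mathbb{Q}(\alpha)\subseteq K$ is immediate, so the task is to show $K\subseteq \mathbb{Q}(\alpha)$.

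First I would observe that $K/\mathbb{Q}$ is Galois (each $\sqrt{n_i}$ has both $\mathbb{Q}$-conjugates $\pm\sqrt{n_i}$ inside $K$), and every $\sigma\in G := \mathrm{Gal}(K/\mathbb{Q})$ necessarily acts by $\sigma(\sqrt{n_i})=\epsilon_i(\sigma)\sqrt{n_i}$ for some $\epsilon_i(\sigma)\in\{\pm 1\}$, simply because $\sigma(\sqrt{n_i})^2 = n_i$. By the fundamental correspondence, it is enough to verify that the stabilizer of $\alpha$ in $G$ is trivial; this will force $[\mathbb{Q}(\alpha):\mathbb{Q}]=|G|=[K:\mathbb{Q}]$, giving $\mathbb{Q}(\alpha)=K$.

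So suppose $\sigma(\alpha)=\alpha$ for some $\sigma\in G$. Subtracting $\alpha$ from both sides yields
\[
\sum_{i\,:\,\epsilon_i(\sigma)=-1} 2a_i\sqrt{n_i}=0.
\]
The summation runs over a sub-collection $I(\sigma)\subseteq\{1,\dots,l\}$ of the original distinct square-free integers. Applying Lemma \ref{L:linear-independence} to this sub-collection with coefficients $2a_i$, and using the hypothesis that each $a_i$ is non-zero, the contrapositive of the lemma forces $I(\sigma)=\emptyset$. Hence $\sigma$ fixes every $\sqrt{n_i}$, i.e., $\sigma=\mathrm{id}$.

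I do not foresee any genuine obstacle: the analytic/arithmetic content, namely the $\mathbb{Q}$-linear independence of $\sqrt{n_i}$'s with distinct square-free radicands, has already been packaged into Lemma \ref{L:linear-independence}, and the corollary is a clean Galois-theoretic wrap-up. The only bookkeeping remark worth making is the case $n_1=1$: then $\sqrt{n_1}$ is rational and always fixed by $G$, so the index $1$ never belongs to $I(\sigma)$ and simply drops out of the sum to which Lemma \ref{L:linear-independence} is applied, causing no harm.
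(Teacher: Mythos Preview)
Your argument is correct and is essentially the same Galois-theoretic approach as the paper's: both produce, from a putative non-trivial automorphism fixing $\alpha$, a non-trivial $\mathbb{Q}$-linear relation among the $\sqrt{n_i}$, contradicting Lemma~\ref{L:linear-independence}. The only cosmetic difference is that the paper argues by contradiction that each $\sqrt{n_i}\in\mathbb{Q}(\alpha)$ using an automorphism of $\bar{\mathbb{Q}}$, while you show directly that the stabilizer of $\alpha$ in $\mathrm{Gal}(K/\mathbb{Q})$ is trivial; via the Galois correspondence these are the same computation.
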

\begin{proof}
Put $c=a_1\sqrt{n_1}+\dots+a_l\sqrt{n_l}$. We need to show that $\sqrt{n_i}\in \mathbb{Q}(c)$ for every $i$. If not, without loss of generality, we may suppose $\sqrt{n_1}\notin \mathbb{Q}(c)$. Then there exists an automorphism $\sigma\in\Gal(\bar{\mathbb{Q}}/\mathbb{Q})$ such that $\sigma(c)=c$ and $\sigma(\sqrt{n_1})=-\sqrt{n_1}$. Putting these together, we will obtain a non-trivial linear relation of $\sqrt{n_1},\dots,\sqrt{n_l}$ over $\mathbb{Q}$; this contradicts with the above lemma. 
\end{proof}

We need the following key property.

\begin{lemma} For $k\geq1$, the field generated by all the algebraic numbers $C_m(k)$ in $\mathbb{Q}_p$ is equal to the compositum 
of all imaginary quadratic fields in $\mathbb{Q}_p$ in which $p$ splits. In particular, this 
field is an infinite algebraic extension of $\mathbb{Q}$ in $\mathbb{Q}_p$. 
 \end{lemma}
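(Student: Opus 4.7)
The plan is to prove both inclusions between the field $F$ generated by $\{C_m(k) : m \geq 1\}$ and the compositum $L$. For the inclusion $F \subseteq L$: any $\gamma \in W_{p,m}$ satisfies $\gamma\bar\gamma = p^m$ in $\mathbb{Q}(\gamma)$, so with $\gamma$ a $p$-adic unit its Galois conjugate $\bar\gamma = p^m/\gamma$ has $p$-adic valuation $m > 0$. The two Galois conjugates having distinct $p$-adic valuations forces $p$ to split in $\mathbb{Q}(\gamma)$. Since $h(\mathcal{O})$ and $B_N(\mathcal{O},\gamma)$ are non-negative integers, $C_m(k) \in L$.

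For the reverse inclusion $L \subseteq F$, I would group the terms in the defining sum by the quadratic field $K = \mathbb{Q}(\gamma)$ containing $\gamma$, thereby obtaining a decomposition
\[
C_m(k) = a_m + \sum_d b_{m,d}\sqrt{-d}, \qquad a_m, b_{m,d} \in \mathbb{Q},
\]
where $d$ runs over squarefree positive integers with $p$ split in $\mathbb{Q}(\sqrt{-d})$. The corollary to Lemma \ref{L:linear-independence} identifies $F$ with $\mathbb{Q}\bigl(\sqrt{-d} : b_{m,d} \neq 0 \text{ for some } m\bigr)$, so it suffices to show that for every such $K = \mathbb{Q}(\sqrt{-d})$ the $K$-contribution $C_m^{(K)}(k) := \sum_{\gamma \in W_{p,m}\cap K}\sum_{\mathcal{O}\in O_\gamma} h(\mathcal{O})B_N(\mathcal{O},\gamma)\gamma^k$ is non-rational for some $m$.

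To produce such $m$, fix $K$ of class number $h$ and choose $\pi \in \mathcal{O}_K$ a $p$-adic unit with $\pi\bar\pi = p^h$, which exists because one of the primes above $p$ becomes principal after raising to the $h$-th power. Then $W_{p,m}\cap K$ is empty unless $h \mid m$ and, for $m = jh$, equals $\{u\pi^j : u\in\mathcal{O}_K^\times\}$. A direct computation yields
\[
C_{jh}^{(K)}(k) = \pi^{jk}\sum_{u\in\mathcal{O}_K^\times} u^k F(j,u), \qquad F(j,u) := \sum_{\mathcal{O}\in O_{u\pi^j}} h(\mathcal{O})B_N(\mathcal{O},u\pi^j) \in \mathbb{Z}_{\geq 0}.
\]
I would then argue (i) that $\pi^{jk}\notin\mathbb{Q}$ for every $j,k\geq 1$, since otherwise $(\pi/\bar\pi)^{jk} = 1$ would force $\pi/\bar\pi$ to be a root of unity in $K$; a finite case check against the available roots of unity ($\pm 1$ generically, together with $\pm i$ in $\mathbb{Q}(i)$ and $\pm\zeta_3,\pm\zeta_6$ in $\mathbb{Q}(\sqrt{-3})$) shows each possibility contradicts $\pi\bar\pi = p^h$ under the hypotheses that $p > 2$ splits in $K$ and $\pi$ is a $p$-adic unit; and (ii) that the integer coefficient $\sum_u u^k F(j,u)$ is nonzero for some $j$, obtained by choosing $j$ with $\bar\pi^j \equiv 1 \pmod{N\mathcal{O}_K}$ (possible since $\bar\pi\in(\mathcal{O}_K/N\mathcal{O}_K)^\times$ as $N(\bar\pi)=p^h$ is coprime to $N$), which makes $B_N(\mathcal{O}_K,\pi^j)$ equal to the total number of order-$N$ elements of $\mathcal{O}_K/N\mathcal{O}_K$ — a positive count since $N > 4$.

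The main obstacle is ruling out cancellation in the unit sum of step (ii). In the generic case $\mathcal{O}_K^\times = \{\pm 1\}$ with $k$ odd one needs $F(j,1) \neq F(j,-1)$: for the same $j$ as above the $\mathcal{O}_K$-contribution to $F(j,-1)$ counts order-$N$ elements satisfying $2x = 0$, which is empty for $N > 2$, so the $\mathcal{O}_K$-parts of $F(j,1)$ and $F(j,-1)$ differ strictly; a supplementary choice of $j$ (e.g.\ ensuring $\pi^j$ lies in no proper suborder $\mathcal{O}_f$, which can be arranged by controlling the $\omega$-coefficient of $\pi^j$) isolates this difference from proper-order contributions. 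The exceptional fields $\mathbb{Q}(i)$ and $\mathbb{Q}(\sqrt{-3})$, where $|\mathcal{O}_K^\times|$ is $4$ or $6$, require a more delicate character-sum analysis over $\mathcal{O}_K^\times$ performed by adjusting $j$ modulo the order of $\bar\pi$ in $(\mathcal{O}_K/N\mathcal{O}_K)^\times$; this case-by-case verification is the technical heart of the argument.
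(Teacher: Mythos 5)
Your first inclusion is correct and matches the paper's. For the reverse inclusion your overall framework is also close to the paper's: group the terms by the quadratic field $K$, choose $\gamma\in K$ with $\gamma\bar\gamma=p^m$ a $p$-adic unit, raise to a power so that $\bar\gamma\equiv 1\pmod{N\mathcal O_K}$, and extract the $K$-part from $C_m(k)$ via the corollary to Lemma \ref{L:linear-independence}. But the place you flag as ``the technical heart'' is precisely where your proposal has a genuine gap, and the paper resolves it with one uniform argument you do not supply.

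The paper's key observation is that for the chosen $\gamma$, the \emph{only} element $\gamma'\in K\cap W_{p,m}$ for which $B_N(\mathcal O,\gamma')>0$ for some $\mathcal O\in O_{\gamma'}$ is $\gamma'=\gamma$ itself. Writing $\gamma'=u\gamma$ with $u\in\mathcal O_K^\times$: if $a\in\mathcal O/N\mathcal O$ has additive order $N$ and is fixed by $\bar\gamma'=\bar u\bar\gamma$, then $\bar\gamma\equiv 1$ forces $(\bar u-1)a=0$, hence $v_\ell(\bar u -1)\geq v_\ell(N)$ for each $\ell\mid N$, hence $(u-1)(\bar u-1)\geq N>4$; but $|u|=1$ gives $(u-1)(\bar u-1)\leq 4$, so $u=1$. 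This kills \emph{all} nontrivial units $u$ in \emph{all} orders $\mathcal O\in O_{u\gamma}$ at once, so $F(j,u)=0$ for $u\neq 1$ in your notation and the unit sum collapses to the single positive term $F(j,1)\,\gamma^k$. Your proposal instead attempts to verify non-cancellation case by case: the computation for $u=-1$ in the generic case (empty count because $2a=0$ forces order $\leq 2<N$) is a special instance of the paper's bound, but you then need a ``supplementary choice of $j$'' to suppress proper-suborder contributions (which is both unnecessary and not clearly achievable), and you explicitly leave $\mathbb{Q}(i)$ and $\mathbb{Q}(\sqrt{-3})$ open. Those exceptional fields are not a technicality to be deferred --- without the $N>4$ vs.\ $|u|=1$ inequality, the argument is incomplete. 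So the proposal as written does not prove the lemma; the missing ingredient is exactly this uniform vanishing of $B_N$ for $u\neq 1$.
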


{\bf Proof}. Let $\gamma$ be a $p$-adic unit such that $\mathbb{Q}(\gamma)$ is an imaginary quadratic field and ${\text{Norm}}^{\mathbb{Q}(\gamma)}_{\mathbb{Q}}(\gamma)=p^m$. It is clear that in this case $p$ splits in $\mathbb{Q}(\gamma)$. Thus $C_m(k)$ is contained in the compositum 
of all imaginary quadratic fields in $\mathbb{Q}_p$ in which $p$ splits.  

Conversely, let $K$ be any imaginary quadratic field in $\mathbb{Q}_p$ in which $p$ splits. Write 
$p\mathcal{O}_K=\mathfrak{p}\bar{\mathfrak{p}}$. Without loss of generality, we may suppose $\bar{\mathfrak{p}}=p\mathbb{Z}_p\cap\mathcal{O}_K$. Write $m=h(\mathcal{O}_K)$, then $\mathfrak{p}^m=(\gamma)$ is a principal ideal, and $\bar{\mathfrak{p}}^m=(\bar \gamma)$. It  follows that ${\text{Norm}}^{\mathbb{Q}(\gamma)}_{\mathbb{Q}}(\gamma)=\gamma\bar \gamma=p^m$ and $\gamma$ is a $p$-adic unit. By replacing $\gamma$ with $\gamma^n$ and $m$ with $mn$ for some suitable positive integer $n$, we may further suppose that
$\bar \gamma \equiv 1\mod N\mathcal{O}_K$. 
In particular, we have $B_N(\mathcal{O},\gamma)>0$ for any $\mathcal{O}\in O_\gamma$. 

We claim that for $\gamma'\in K\cap W_{p,m}$, if $B_N(\mathcal{O},\gamma')>0$ for some $\mathcal{O}\in O_{\gamma'}$, then $\gamma'=\gamma$.   In fact, since
${\text{Norm}}^{K}_{\mathbb{Q}}(\gamma')=p^m$, we first have that 
$(\gamma')=\mathfrak{p}^i\bar{\mathfrak{p}}^j$ with $i+j=m$. Note thtat $\gamma'$ is a $p$-adic unit; this yields $j=0$. That is $(\gamma)=(\gamma')$. Thus we may write $\gamma'=u\gamma$ for some unit $u$. 

Now choose $a\in \mathcal{O}$ such that its image in $\mathcal{O}/N\mathcal{O}$ is of order $N$ and fixed by $\bar{\gamma'}$. Since $\bar \gamma \equiv 1\mod N\mathcal{O}_K$, we get that the product $(\bar{u}-1)a$ is zero in the quotient group $\mathcal{O}/N\mathcal{O}$.   From
the assumption that a has order exactly $N$, one deduces that for every
prime $l$ dividing $N$, the $l$-adic valuation $v_{l}(\bar{u}-1) \geq
v_{l}(N)$. It follows that if $u$ is not 1, then $(u-1)(\bar{u}-1) \geq N$.
Since $u\bar{u}=1$, this gives the inequality $4 < N \leq (u-1)(\bar{u}-1)
\leq 4$ as $u$  has complex norm 1,  which is a contradiction if $u$ is not $1$!
 
Consequently, we may write
\[
C_m(k)=(\sum_{\mathcal{O}\in O_{\gamma}}h(\mathcal{O})B_N(\mathcal{O},\gamma))\gamma^k+\alpha
\] 
where $\alpha$ is a sum of  elements contained in quadratic fields different from $K$.  Since  $\gamma^k$ and 
$\bar{\gamma}^k$ has different $p$-adic valuation, we deduce that $\gamma^k\notin\mathbb{Q}$. Thus $\mathbb{Q}(\gamma)=\mathbb{Q}(\gamma^k)$. By the above corollary, we therefore conclude that 
$K=\mathbb{Q}(\gamma^k)$ is contained in the field generated by $C_m(k)$. This yields the lemma.

\bigskip

We now return to the proof of the theorem. Let $k\geq 1$ be a positive integer. 
Let ${\cal{F}}$ denote the relative rigid cohomology of $E_1(Np)$ over $Y$, which is 
an ordinary overconvergent $F$-isocrystal over $Y$ of rank two, sef-dual and pure of weight $1$. 
The rank one 
$p$-adic representation $\rho$ is precisely the unit root part of ${\cal F}$.  
It follows that the $L$-function of the $k$-th  Adams operation of ${\cal{F}}$ is 
$$L(Y, \rho^{\otimes k}, T) L(Y, \rho^{\otimes (-k)}, p^kT)
=  \frac{L(Y, \Sym^k{\cal F}, T)}{L(Y, \Sym^{k-2}{\cal F}, pT)}.$$
The right side is a rational function with integer coefficients. 
If both $L(Y, \rho^{\otimes k}, T)$ and $ L(Y, \rho^{\otimes (-k)}, T)$ had a finite number of poles, 
then the  above left side would be a $p$-adic meromorphic function with a finite number of poles, and it is at the 
same time a rational function. 
It would then follow that both  $L(Y, \rho^{\otimes k}, T)$ and $ L(Y, \rho^{\otimes (-k)}, T)$ have a finite number of zeros and thus both 
would be rational functions. This implies that the coefficients of $L(Y, \rho^{\otimes k}, T)$ and 
$ L(Y, \rho^{\otimes (-k)}, T)$ generate a finite algebraic extension of $\mathbb{Q}$ in $\mathbb{Q}_p$, 
contradicting to the lemma. We conclude that at least one of the two functions 
$L(Y, \rho^{\otimes k}, T)$ and $ L(Y, \rho^{\otimes (-k)}, T)$ has  infinitely many poles. 
The theorem is proved.

\begin{rem}
For any positive integer $k\geq 1$, we believe that both functions $L(Y, \rho^{\otimes k}, T)$ and $ L(Y, \rho^{\otimes (-k)}, T)$ 
have  infinitely many poles. But we do not know how to prove it. 
\end{rem}

\begin{rem}
  In the analogous setting of the family of Kloosterman sums, the
unit root L-function is again expected to be non-rational, but this
remains unknown at present, see page 4 in \cite{Ha}.
\end{rem}
Our second result of this paper is to show that for a geometric $p$-adic representation $\rho$ on a smooth affine 
curve $U$ over $\mathbb{F}_p$, the L-function $L(U, \rho, T)$ is $p$-adic analytic (no poles)  in the larger disc  
$|T|_p < 1+ c(p, \rho)$ for some positive constant $c(p, \rho)$. In fact, we shall prove a more general 
theorem in the context of $\sigma$-modules as in \cite{W2}\cite{W3}. For simplicity of notation, we use $L(\rho, T)$ 
to denote $L(U, \rho, T)$.

 \begin{thm} Let $U$ be a smooth affine curve over $\mathbb{F}_q$. Let $\rho$ be a unit root $\sigma$-module 
 which arises as a pure slope part of an overconvergent $\sigma$-module on $U$. Then, 
 there is a positive constant $c(p,\rho)$ such that 
the L-function $L(\rho, T)$  is $p$-adic analytic (no poles)  in the larger disc  
$|T|_p < 1+ c(p, \rho)$. 
\end{thm}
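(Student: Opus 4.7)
The plan is to adapt the $\sigma$-module machinery Wan developed in \cite{W2}\cite{W3} in his proof of Theorem 3.3 ($p$-adic meromorphy of $L(\rho,T)$ on the whole plane), in order to sharpen the Emerton--Kisin Artin theorem (Theorem 3.1) slightly beyond the closed unit disc. By hypothesis, $\rho$ is a pure slope part of an overconvergent $\sigma$-module on $U$; after a Tate twist we may assume $\rho$ is the unit root (slope $0$) piece of an overconvergent $\sigma$-module $M$ on the smooth affine curve $U$. Let $\sigma_1 > 0$ be the smallest positive generic Newton slope of $M$.

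The first step is to invoke Wan's limiting factorization, writing
\[
L(\rho, T) = \prod_{k \geq 0} L(\psi_k, T)^{(-1)^{\epsilon_k}},
\]
where each $\psi_k$ is an overconvergent $\sigma$-module on $U$ built functorially from $M$ by iterated $\Sym$ of the positive-slope quotient of $M$ tensored with $M$ (or its suitable variants). By the Etesse--Le Stum rationality theorem for L-functions of overconvergent $F$-isocrystals on smooth affine curves, each factor $L(\psi_k, T)$ is a rational function, whose zeros and poles are reciprocals of Frobenius eigenvalues on finite-dimensional rigid cohomology.

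The second step is a Newton polygon estimate. By construction, the generic Newton slopes of $\psi_k$ are bounded below by $k \sigma_1$ for $k \geq 1$: tensoring by positive-slope symmetric powers shifts the entire spectrum upward. Invoking the standard Newton-over-Hodge lower bound for Frobenius on rigid cohomology of $\psi_k$, every reciprocal zero or pole of $L(\psi_k, T)$ lies in $|T|_p \geq q^{k \sigma_1}$. Hence for any $c$ with $0 < c < q^{\sigma_1} - 1$, no factor with $k \geq 1$ contributes to the pole set of $L(\rho,T)$ inside the disc $|T|_p < 1 + c$, and one is reduced to analyzing the single factor $L(\psi_0, T)^{\pm 1}$, in which $\psi_0$ is essentially $M$ itself. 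Emerton--Kisin combined with a Newton-over-Hodge lower bound on the rigid cohomology of $M$ then shows that, absent geometrically trivial components of $\rho$, the next pole of $L(\psi_0, T)^{\pm 1}$ outside $|T|_p = 1$ lies at $|T|_p \geq q^{\sigma_1}$. Taking $c(p,\rho) := q^{\sigma_1} - 1$ (or any smaller positive constant) gives the desired analyticity; in the ordinary case with $\Fq = \mathbb{F}_p$ and $\sigma_1 = 1$, this recovers the announced $c = p - 1$.

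The main technical obstacle is the quantitative cancellation of potential poles in the annulus $1 < |T|_p < 1 + c$. Emerton--Kisin supplies only the qualitative statement on $|T|_p \leq 1$; one needs to show that any Frobenius eigenvalues just outside the closed unit disc arising from the rigid cohomology of $\psi_0$ are either cancelled by matching zeros from the higher-$k$ contributions, or are forced to satisfy a uniform Newton-over-Hodge-type slope bound scaled by $\sigma_1$. This uniform control, where overconvergence of $M$ and the pure-slope hypothesis enter essentially, is the heart of the argument and the place where the explicit constant $c(p,\rho)$ must be tracked.
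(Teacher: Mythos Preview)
Your overall architecture---express $L(\rho,T)$ via Wan's limiting machinery as a product of L-functions of overconvergent objects, then bound the higher factors away from the unit disc---matches the paper's. But the paper resolves precisely the step you flag as the ``main technical obstacle,'' and it does so by a completely different mechanism than the one you propose. You try to control the poles of $L(\psi_0,T)=L(M,T)$ just beyond $|T|_p=1$ by combining Emerton--Kisin with a Newton-over-Hodge bound on rigid cohomology; as you yourself note, Emerton--Kisin gives nothing past the closed unit disc, and there is no general slope inequality on $H^*_{c,\mathrm{rig}}$ that pushes the first pole out to $|T|_p\geq q^{\sigma_1}$. The paper never invokes Emerton--Kisin or rigid cohomology at all. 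Instead it uses the Monsky trace formula directly (Corollary~3.3 of \cite{W3}): on a smooth affine curve, $L(\phi,T)=\det(I-\phi_0^*T)\,/\det(I-\phi_1^*T)$ as a ratio of entire Fredholm determinants, and the built-in divisibility $\phi_1^*\equiv 0\pmod{\pi}$ forces every pole of $L(\phi,T)$ to satisfy $|T|_p\geq |\pi|_p^{-1}$. That single integrality fact is the whole engine; nothing about cancellation of cohomological eigenvalues is needed.

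Two further points. First, your ``after a Tate twist we may assume $\rho$ is the slope~$0$ piece of an overconvergent $M$'' is not justified when $\rho=\phi_i$ with $i>0$: the slope-$i$ piece of an overconvergent $\phi$ is not in general the slope-$0$ piece of any overconvergent $\sigma$-module---this is exactly why the higher-rank case of Dwork's conjecture is harder. The paper instead builds an auxiliary overconvergent $\varphi$ from exterior powers of $\phi$ and a rank-one unit-root (but \emph{not} overconvergent) twist $\tau$ so that $\phi_i$ is the slope-$0$ part of $\varphi\otimes\tau^{-1}$, and then uses Theorem~7.8 of \cite{W3} to write $L(\varphi\otimes\tau^{-1},T)$ as an alternating product of L-functions of \emph{nuclear} (infinite-rank) overconvergent $\sigma$-modules. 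Second, for those nuclear factors Etesse--Le~Stum rationality is unavailable; the paper proves a separate lemma extending the Monsky-trace-formula integrality $\Theta_i\equiv 0\pmod{\pi^i}$ to the nuclear overconvergent setting, which again gives pole-freeness on $|T|_p<|\pi|_p^{-1}$ directly. Replacing your Emerton--Kisin/Newton-over-Hodge endgame with this trace-formula integrality closes the gap and yields the explicit constant $c(p,\rho)=|\pi|_p^{-1}-1$ (hence $p-1$ when $\pi=p$) in the ordinary case.
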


\begin{proof} Let $\phi$ be an overconvergent $\sigma$-module 
on $U$ with coefficients in $R_p$ with uniformizer $\pi$. Since $\phi$ is overconvergent, 
Corollary 3.2 in \cite{W3} shows that its L-function $L(\phi, T)$ is $p$-adic meromorphic 
everywhere. As $U$ is a smooth affine curve, Corollary 3.3 in \cite{W3} further shows that 
$L(\phi, T)$ is $p$-adic analytic in the disk  $|T|_p < |\pi^{-1}|_p$. Note that $|\pi^{-1}|_p$ 
is a constant greater than $1$. For example, in the case $\pi=p$, we have $|\pi^{-1}|_p=p$.

We first assume that  $\phi$ is ordinary. 
For an integer $i\geq 0$, let $\phi_i$ denote the unit root $\sigma$-module on $U$ coming from 
the slope $i$-part in the Hodge-Newton decomposition of $\phi$. 
It is no longer overconvergent in general. 
We need to show that the unit root $\sigma$-module 
L-function $L(\phi_i, T)$ is $p$-adic analytic in the disk $|T|_p < |\pi^{-1}|_p$. 
By the definition of $\phi_i$ and our ordinariness assumption, 
we have the decomposition 
$$L(\phi, T) =\prod_{i\geq 0} L(\phi_i, \pi^i T)=L(\phi_0, T) \prod_{i\geq 1}L(\phi_i, \pi^iT).$$
As mentioned above, the left side is $p$-adic analytic in the disk  $|T|_p < |\pi^{-1}|_p$. For each $i\geq 1$, the right side factor  
$L(\phi_i, \pi^iT)$ is trivially $p$-adic analytic with no zeros and poles in the disk  $|T|_p < |\pi^{-1}|_p$. 
We deduce that the first right side factor $L(\phi_0, T)$ is also $p$-adic analytic in the disk  $|T|_p < |\pi^{-1}|_p$. 
This proves the theorem in the case $i=0$. 

For $i>0$, we need to use the proof of Dwork's conjecture in \cite{W2}\cite{W3}. 
Let $\psi =\phi_i$. We need to prove that $L(\psi, T)$ is $p$-adic analytic in the disk  $|T|_p < |\pi^{-1}|_p$. 
Let $r_i$ denote the rank of $\phi_i$.
Define 
$$\tau= \wedge^{r_0}\phi_0 \otimes \wedge^{r_1}\phi_1 \otimes \cdots \otimes \wedge^{r_{i-1}}\phi_{i-1}.$$ 
This is a rank one unit root $\sigma$-module on $U$, not overconvergent in general.  
Define 
$$\varphi = \pi^{ -r_1-\cdots -(i-1)r_{i-1} -i} \wedge^{r_0+\cdots +r_{i-1}+1}\phi.$$
Since $\phi$ is ordinary and overconvergent, it follows that $\varphi$ is also 
ordinary and overconvergent. For an integer $j\geq 0$, let $\varphi_j$ denote the unit root $\sigma$-module on $U$ coming from 
the slope $j$-part in the Hodge-Newton decomposition of $\varphi$. 
Then, it is easy to check that we have the following decomposition (see equation (5.1) in \cite{W3}). 
$$L(\varphi \otimes \tau^{-1}, T) =L(\psi, T) \prod_{j\geq 1} L(\varphi_j \otimes \tau^{-1}, \pi^j T).$$ 
For each $j\geq 1$, the factor $L(\varphi_j\otimes \tau^{-1}, \pi^jT)$ is 
trivially $p$-adic analytic with no zeros and poles in the disk  $|T|_p < |\pi^{-1}|_p$. 
To prove that $L(\psi, T)$ is also $p$-adic analytic in the disk  $|T|_p < |\pi^{-1}|_p$, 
it suffices to prove that the left side factor $L(\varphi \otimes \tau^{-1}, T)$ is $p$-adic analytic in the disk  $|T|_p < |\pi^{-1}|_p$. 

Now, the rank one unit root $\sigma$-module $\tau$ is the slope zero part of the 
following ordinary and overconvergent $\sigma$-module
$$\Phi = \pi^{ -r_1-\cdots -(i-1)r_{i-1}} \wedge^{r_0+\cdots +r_{i-1}}\phi.$$
By Theorem 7.8 in \cite{W3}, we deduce that there is a sequence of nuclear overconvergent 
$\sigma$-modules $\Phi_{\infty, -k}$ ($k\geq 2$) such that 
$$L(\varphi \otimes \tau^{-1}, T) =\prod_{k\geq 1} L(\varphi \otimes \Phi_{\infty, -1-k} \otimes \wedge^k \Phi, T)^{(-1)^{k-1}k}.$$
Since $\Phi$ is ordinary and its slope zero part has rank one, $\wedge^k\Phi$ is divisible by 
$\pi^{k-1}$. It follows that for $k\geq 2$, the L-function $L(\varphi \otimes \Phi_{\infty, -1-k} \otimes \wedge^k \Phi, T)$ 
is trivially $p$-adic analytic with no zeros and poles in the disk  $|T|_p < |\pi^{-1}|_p$. 
For the remaining case $k=1$, we apply the one dimensional case of the following $n$-dimensional integrality result   
and deduce that $L(\varphi \otimes \Phi_{\infty, -2} \otimes \Phi, T)$ is 
$p$-adic analytic in the disk  $|T|_p < |\pi^{-1}|_p$. The theorem is proved in the ordinary case. 

\begin{lem} Let $U$ be a smooth affine variety of equi-dimension $n$ over $\mathbb{F}_q$. Let $\phi$ be an overconvergent nuclear 
	$\sigma$-module on $U$. Then, 
	the L-function $L(\phi, T)^{(-1)^{n-1}}$  is $p$-adic analytic (no poles)  in the disc  
	$|T|_p < |\pi^{-1}|_p$. 
\end{lem}

\begin{proof}
	In the case that $\phi$ has finite rank, this integrality is already proved in Corollary 3.3 in \cite{W3}. 
	In this case,  the finite rank Monsky trace formula (Theorem 3.1 in \cite{W3}) 
states that
$$L(\phi, T)^{(-1)^{n-1}} = \prod_{i=0}^n \det(I -\phi_i^*T|M_i^*\otimes_RK)^{(-1)^i},$$
where $R=R_p$ in our current notation and $\det(I -\phi_i^*T|M_i^*\otimes_R K)\in  1+TR[[T]]$ is a $p$-adic entire function. Now, the divisibility 
$\phi_i^* \equiv 0~(\mathrm{mod}~\pi^i)$ (equation (3.4) in \cite{W3}) shows that 
$$\det(I -\phi_i^*T|M_i^*\otimes_RK) \in 1+\pi^iTR[[\pi^iT]].$$ 
This implies the integrality in the finite rank case. 
	For infinite rank nuclear overconvergent $\sigma$-modules, the proof is the same. One simply uses the infinite rank 
	nuclear overconvergent trace formula (Theorem 5.8 in \cite{W2}): 
	$$L(\phi, T)^{(-1)^{n-1}} = \prod_{i=0}^n \det(I -\Theta_iT|M_i^*\otimes_RK)^{(-1)^i}.$$
	Note that there is a misprint of indices in Theorem 5.8 in \cite{W2}: $\det(I -\Theta_iT|M_i^*\otimes_RK)$ there should be 
	$ \det(I -\Theta_{n-i}T|M_{n-i}^*\otimes_RK)$, compare the finite rank case (Theorem 3.1 in \cite{W3}). 
	Now, one uses the same divisibility $\Theta_i \equiv 0 ~(\mathrm{mod}~\pi^i)$, which follows from the definition of 
	$\Theta_i$ (Definition 5.5 in \cite{W2}) and the fact that $\phi_i =\phi\otimes \sigma_i$ is divisible by $\pi^i$.

\end{proof}

In the general non-ordinary case, by a similar argument, we may apply
the methods in \cite{W2}\cite{W3} for non-ordinary case   
to give an explicit positive constant $c(p, \rho)$ depending on $\pi$ and the rank of $\phi$ 
such that $L(\rho, T)$ is $p$-adic analytic in the disk  $|T|_p < 1 + c(p, \rho)$. 
The constant $c(p, \rho)$ depends very badly on the rank of $\phi$, and 
so we would not bother to write it down explicitly. 
\end{proof}

The above theorem has a higher dimensional generalization. 
We state this generalization below.

\begin{rem} Let $U$ be a smooth affine variety of equi-dimension $n$  over $\mathbb{F}_q$. Let $\rho$ be a unit root $\sigma$-module on $U$. 
Then, the L-function $L(\rho, T)^{(-1)^{n-1}}$ is $p$-adic analytic on the closed unit disk $|T|_p \leq 1$.   
If $\rho$ arises as a pure slope part of an overconvergent $\sigma$-module on $U$. Then, 
 there is a positive constant $c(p,\rho)$ such that 
the L-function $L(\rho, T)^{(-1)^{n-1}}$  is $p$-adic analytic (no poles)  in the larger disc  
$|T|_p < 1+ c(p, \rho)$. 
\end{rem}

The first part follows from Emerton-Kisin's theorem on the Katz conjecture and standard properties of $p$-adic \'etale cohomology.  
The proof of the second part is the same as the above theorem, and use the results in \cite{W2}\cite{W3}. We expect that both parts  
remain true if $U$ is an equi-dimensional complete intersection (possibly singular) in a smooth affine variety $X$ over $\mathbb{F}_q$. 
This possible generalization is motivated by the characteristic $p$ entireness result in \cite{TW}. 

\begin{rem} 
In the special case that $U$ is the compliment of a hypersurface
and the Frobenius lifting is the $q$-th power lifting of the
coordinates, the result of Dwork-Sperber \cite{DS} can be used
to prove the $p$-adic analytic continuation of $L(\rho, T)^{(-1)^{n-1}}$
for geometric ordinary $\rho$ in the open disc
$|T|_p<p^{(p-1)/(p+1)}$. This result is weaker than our result since
the disc $|T|_p<p^{(p-1)/(p+1)}$ is smaller than the disc
$|T|_p<p$ obtained in our approach.
\end{rem}

\end{document}